\newtheorem{theorem}{Theorem}[section]
\newtheorem{lemma}{Lemma}[section]
\newtheorem{proposition}[theorem]{Proposition}
\newtheorem{corollary}[theorem]{Corollary}
\newtheorem{definition}{Definition}[section]
\def\bkE{{\rm I\kern-.17em E}}
\def\bk1{{\rm 1\kern-.17em l}}
\def\bkD{{\rm I\kern-.17em D}}
\def\bkR{{\rm I\kern-.17em R}}
\def\bkP{{\rm I\kern-.17em P}}
\def\bkZ{{\bf{Z}}}
\def\bkE{{\rm I\kern-.17em E}}
\def\bk1{{\rm 1\kern-.17em l}}
\def\bkD{{\rm I\kern-.17em D}}
\def\bkR{{\rm I\kern-.17em R}}
\def\bkP{{\rm I\kern-.17em P}}
\def\bkZ{{\bf{Z}}}
\def\b12{(\beta_1,\beta_2)}
\newenvironment{proof}[1][]{{\noindent \bf Proof #1: }}{\hfill \qed \vspace{6pt}\\ }
\newcounter{example}
\renewcommand{\theexample}{\thesection.\arabic{example}}
\newcounter{remark}
\renewcommand{\theremark}{\thesection.\arabic{remark}}
\newenvironment{remark}{{\noindent \it Remark: }}{\hfill $\square$}
\newlength{\noteWidth}
\long\def\notes#1{\ifinner
{\tiny #1}
\else
\marginpar{\parbox[t]{\noteWidth}{\raggedright\tiny #1}}
\fi\typeout{#1}}
 \def\notes#1{\typeout{read notes: #1}} 
\newcommand{\minimize}[1]{\displaystyle\minim_{#1}}
\newcommand{\minim}{\mathop{\hbox{\rm min}}}
\newcommand{\maximize}[1]{\displaystyle\maxim_{#1}}
\newcommand{\maxim}{\mathop{\hbox{\rm max}}}
\def\subject{\hbox{\rm s.t.}}
\def\spose#1{\hbox to 0pt{#1\hss}}
\def\text #1{\hbox{\quad#1\quad}}
\def\nthinsp{\mskip -2   mu}
\def\superstar{^{\raise 0.5pt\hbox{$\nthinsp *$}}}
\def\SUPERSTAR{^{\raise 0.5pt\hbox{$*$}}}
\def\lamstarT {\lambda^{\raise 0.5pt\hbox{$\nthinsp *$}T}}
\def\SOL{{\rm SOL}}
\let\forallnew\forall
\renewcommand{\forall}{\forallnew\ }
\let\forall\forallnew
		\def\bkE{{\rm I\kern-.17em E}}
		\def\bk1{{\rm 1\kern-.17em l}}
		\def\bkD{{\rm I\kern-.17em D}}
		\def\bkR{{\rm I\kern-.17em R}}
		\def\bkP{{\rm I\kern-.17em P}}
		\def\bkY{{\bf \kern-.17em Y}}
		\def\bkZ{{\bf \kern-.17em Z}}
		\def\bkC{{\bf  \kern-.17em C}}
		\def\bsp{\begin{split}}
		\def\beq{\begin{eqnarray}}
		\def\bal{\begin{align*}}
		\def\bc{\begin{center}}
		\def\be{\begin{enumerate}}
		\def\bi{\begin{itemize}}
		\def\bs{\begin{small}}
		\def\bS{\begin{slide}}
		\def\ec{\end{center}}
		\def\ee{\end{enumerate}}
		\def\ei{\end{itemize}}
		\def\es{\end{small}}
		\def\eS{\end{slide}}
		\def\eeq{\end{eqnarray}}
		\def\eal{\end{align*}}
		\def\esp{\end{split}}
		\def\qed{ \vrule height7.5pt width7.5pt depth0pt}  
		\def\problem#1#2#3#4{\fbox
		 {\begin{tabular*}{0.80\textwidth}
			{@{}l@{\extracolsep{\fill}}l@{\extracolsep{6pt}}l@{\extracolsep{\fill}}c@{}}
				#1 & $\minimize{#2}$ & $#3$ & $ $ \\[5pt]
					 & $\subject\ $    & $#4$ & $ $
			\end{tabular*}}
			}
\def\maxproblem#1#2#3#4{\fbox
		 {\begin{tabular*}{0.80\textwidth}
			{@{}l@{\extracolsep{\fill}}l@{\extracolsep{6pt}}l@{\extracolsep{\fill}}c@{}}
				#1 & $\maximize{#2}$ & $#3$ & $ $ \\[5pt]
					 & $\subject\ $    & $#4$ & $ $
			\end{tabular*}}
			}
	\def\cp2problem#1#2#3#4{\fbox
		 {\begin{tabular*}{0.9\textwidth}
			{@{}l@{\extracolsep{\fill}}l@{\extracolsep{6pt}}l@{\extracolsep{\fill}}c@{}}
				#1 & & $#4 $ 
			\end{tabular*}}}
		\def\bkE{{\rm I\kern-.17em E}}
		\def\bk1{{\rm 1\kern-.17em l}}
		\def\bkD{{\rm I\kern-.17em D}}
		\def\bkR{{\rm I\kern-.17em R}}
		\def\bkP{{\rm I\kern-.17em P}}
		\def\bkZ{{\bf{Z}}}
\newcommand {\beeq}[1]{\begin{equation}\label{#1}}
\newcommand {\eeeq}{\end{equation}}
\newcommand {\bea}{\begin{eqnarray}}
\newcommand {\eea}{\end{eqnarray}}
\def\texitem#1{\par\smallskip\noindent\hangindent 25pt
               \hbox to 25pt {\hss #1 ~}\ignorespaces}
\def\subject{\hbox{\rm subject to}}
\def\SOL{{\rm SOL}}
\def\maxproblem#1#2#3#4{\fbox
	{\begin{tabular*}{0.95\textwidth}
			{@{}l@{\extracolsep{\fill}}l@{\extracolsep{6pt}}l@{\extracolsep{\fill}}c@{}}
			#1 & $\maximize{#2}$ & $#3$ & $ $ \\[5pt]
			& $\subject\ $    & $#4$ & $ $
		\end{tabular*}}
	}
\def\problem#1#2#3#4{\fbox
		{\begin{tabular*}{0.95\textwidth}
				{@{}l@{\extracolsep{\fill}}l@{\extracolsep{6pt}}l@{\extracolsep{\fill}}c@{}}
				#1 & $\minimize{#2}$ & $#3$ & $ $ \\[5pt]
				& $\subject\ $    & $#4$ & $ $
			\end{tabular*}}
		}
\begin{document}

%
\title{Price-Coupling Games and the Generation Expansion Planning Problem }
%
%
%

\author{Mathew~P.~Abraham and 
        Ankur~A.~Kulkarni
\thanks{Mathew  and Ankur are with the Systems and Control Engineering group, Indian Institute of Technology Bombay, Mumbai, India, 400076. email: \texttt{mathewp@iitb.ac.in}, \texttt{kulkarni.ankur@iitb.ac.in}.
A preliminary version of the paper is presented at Indian Control Conference 2018, Kanpur~\cite{abraham2018existence}}
}

\maketitle

\begin{abstract}In this paper, we introduce and study a class of games called price-coupling games that arise in many scenarios, especially in the electricity industry. In a price-coupling game, there is a part of the objective function of a player which has an identical form for all players and there is coupling in the cost functions of players through a price which is determined uniformly for all players by an independent entity called the \textit{price-determining player} (e.g. independent system operator (ISO) in an electricity market). This price appears in the objective function only in the part which is identical for all players.  We study the existence of equilibria in such games under two broad categories, namely \textit{price-anticipative} and \textit{price-taking} formulations. In the price-anticipative formulation, the players anticipate the price and make their decisions while in the price-taking formulation, the players make their decisions considering the price as a given parameter. We model the price-anticipative case as a leader-follower formulation where the players (leaders) conjecture the price (follower's decision) and make their decision. The price-taking formulation is modelled as an $N+1$ player game with the additional player as the price-determining player. The existence of an equilibrium in such games are not easy mainly because of the coupled-constraint structure of the game and the non-convexity of the action set. We give conditions for the existence of equilibria in both formulations.  We apply our results to  analyze the existence of an equilibrium in the generation expansion planning problem using the above results.      
    
\end{abstract} 

\begin{IEEEkeywords}
Price-coupling Game, Stackelberg Game, Generalized Nash Game, Generation Expansion Planning
\end{IEEEkeywords}

\section{Introduction}\label{sec:introduction}
The recent past has seen the stupendous growth of market mechanisms involving buyers and sellers in markets as complex as commodity markets (like electricity) or service markets like transportation. Unlike in old-school markets, the price that suppliers see in these modern markets is itself determined by a fairly complicated decision problem. For example, in the electricity industry, the price is determined by the \textit{independent system operator} such a way so as to procure electricity at the lowest cost for the consumer. In on-demand transportation, platforms such as Uber, Lyft or Ola determine prices conceivably based on a variety considerations such, demand arrival, supply availability, tolls, etc. This is leading to novel decision problems on the supply side for which classical analysis fails to apply. Moreover, the complexity of these mechanisms also means that this analysis is mathematically non-trivial. The goal of this paper is to develop foundational tools for claiming the existence of equilibria in such games that arise in such formulations by introducing a new class of games, called \textit{price-coupling games}.

In microeconomics,  players on the supply side are generally modeled in two ways, as competitive players and as strategic players. In the formulation as competitive players, the players are called \textit{price takers}, which means that the players take their decision assuming the price is exogenously given \cite{schiro2015perfectly}, \cite{arrow54existence}. This is indeed true in markets where there is no player that has enough market power to affect prices. While in the formulation as strategic players, the players anticipate the price and make their decision.  Thus, while making decisions, the players consider the price of the quantity to be determined as a function of their decisions \cite{hobbs-strategic}, \cite{hobbs01linear}. We call the first formulation as the \textit{price-taking formulation} and the latter as the \textit{price-anticipative formulation}.

In this paper, we consider the existence of equilibria in these formulations for the class of games called price-coupling games we introduce in this paper. In such games, the objective function of players has the following form. It is a summation of two terms, the first term which depends on the price and the other term which is independent of the price. The form of the term which is dependent on the price is same for all players and the form of the other term which is independent of price can be different for different players. The price itself is determined by another player called the \textit{price-determining player} who has his own optimization problem. As stated above, the study of such games is motivated by the observation that many games of supply-side competition arising in practice fall in this category, where the price is identical for all players and it has a uniform influence on the players. 

Our main thrust in this paper is that this class of games has a peculiar structure which one can exploit to provide results for a variety of settings concerning such games. To make our contributions precise, we introduce a game $\mathcal{G}$ with following notation.
Let $\mathcal{N}= \{1,2,\dots,N\}$ be the set of players,
$X_i \subseteq \mathbb{R}^{m_i}$, $i \in \mathcal{N}$ be the set of decisions of player $i$, $X \triangleq \prod\limits_{i \in \mathcal{N}}^{} X_i$ be the set of decision-tuples of all players, let $x_i \in X_i$ be the  decision  of player $i$, $x = (x_1,x_2,\dots,x_N) \in X$ be the decisions of all players, and $x^{-i} \triangleq (x_1,\dots,x_{i-1},x_{i+1},\dots,x_N) \in X^{-i}$ be the decisions of every player except player $i$. Finally, let $\SOL[\rm{P}]$  be solution of the optimization problem $\rm{P}$.

\subsubsection*{Player $i$'s problem in a price-coupling game}
For a game $\mathcal{G}$,  we assume player $i$'s problem denoted by $P_i(x^{-i})$ which is given as follows:
$$
\maxproblem{$P_i(x^{-i})$}
{x_i, p}
{h(p,x)+g_i(x)}
{x_i \in {X}_i, p \in \SOL[\mathcal{S}(x)]. }
$$
Here $p$ represents the price. The objective function of player $i$ is a summation of two terms, $h$ which is dependent on $p$ and $g_i$ which is independent of $p$. Also note that the part of the objective function which is dependent on $p$ has the same form in the objective function of all players. The other part which is independent of $p$ may have different forms for different players. The price in a price-coupling game is given as a solution of an optimization problem denoted by $\mathcal{S}(x),$ which is given below. We call this problem as the {price-determining player problem}. 
\subsubsection*{Price-determining player}

For a game $\mathcal{G}$, the price-determining player takes $x \in X$ from the other players and determines a price $p$ for all players. The optimization problem of the price-determining player $\mathcal{S}(x)$ in its general form is given as follows: 
$$
\problem{$\mathcal{S}(x)$}
{p}
{f(x,p)}
{p \in \mathcal{M}(x). }
$$
Here the function $f(x,\cdot)$ is the cost function that the price-determining player minimizes over $\mathcal{M}(x)$, the constraint on the price. This is an illustration of one kind of a price-coupling game. There are other price-coupling games with variations from this structure which will be discussed in later sections. 

Guaranteeing the existence of an equilibrium is not straightforward in a price-coupling game. One reason is that unlike a standard game, the set of constraints of players are coupled, that is the decision of one player changes the feasible set of other players. Another reason is the non-convexity of the feasible region of a player, which occurs due to the appearance of $\mathcal{S}(x)$ in the constraints of $P_i(x^{-i})$.

The main contributions of the paper are as follows.
A player's problem in a price-coupling game is formulated as a \textit{price-anticipative formulation} and \textit{price-taking formulation} depending on how the price determined by the price-determining player influences the participants' decision making.  In the price-anticipative formulation, the players conjecture the solution of the price-determining player problem while making their decision. Thus, we model it as a leader-follower Stackelberg game with the players considered as the leaders and the price-determining player as the follower. It is known that there need not exist an equilibrium even for a simple leader-follower game as shown by Pang and Fukushima in \cite{pang98complementarity}. In this formulation, we categorize and analyze the existence of equilibria of the price-coupling game in {two} different classes depending on the mathematical properties of the game. We provide conditions for the existence of equilibria in these classes of price-coupling games. 

The second formulation is the price-taking formulation in which the price is taken as a given parameter for each player except the price-determining player. As pointed out in a recent paper \cite{schiro2015perfectly} there has not been much progress regarding the existence of an equilibrium in price-taking formulation after the results by Arrow and Debreu in \cite{arrow54existence}. In our formulation an equilibrium in the price-taking formulation is given as an equilibrium of an $N+1$ player game with the additional player is the price-determining player. In this formulation, we use a potential function approach to provide conditions for the existence of an  equilibrium.	 Finally, we consider a concrete application -- the generation expansion planning problem and we apply our results to this application. The existence of equilibria in generation expansion planning problem presented in \cite{chuang2001game} is analyzed in both these formulations. 

Our work follows a long line of work electricity markets and various related models. The main challenges in such problems have been discussed in~\cite{pang05quasi}, \cite{kulkarni2014shared}, \cite{kulkarni2015existence}, \cite{kulkarni2013consistency}, and the recent works \cite{pozo2017basic}, \cite{zerrahn2017network}. The volume of work here is substantial and we refer the interested reader to the above works for details.
The paper is organized as follows. The next section gives an example of price-coupling games from the electricity market.  Section \ref{sec:price_anticipative} elaborates the price-anticipative formulation of the price-coupling game. The price-taking formulation and the different cases are explained in Section \ref{sec:price_taking}. Section \ref{sec:GEP} elaborates the application of results to generation expansion planning problem. The paper ends with a conclusion in Section \ref{sec:conclusion}.

\section{An Example: Cournot Model of Electricity Trading}
{We consider a well-known example of the Cournot model of electricity trading and show how it fits into our framework of a price-coupling game.} In a Cournot model of competition \cite{hobbs01linear}, the players submit their bids as quantity $q_i, i \in \mathcal{N}$ which they are willing to supply. Let $q= q_1+\dots+q_N$ be the total quantity bids of the players. Suppose $\mathcal{D}(q)$ is the demand curve of that market which gives the relation of price that the consumers are willing to pay for a $q$ quantity of electricity. The ISO (Independent system operator) determines the market clearing price from bids by the players $q_i,i \in \mathcal{N}$ and the demand curve $\mathcal{D}(q)$. Suppose $\mathbf{q}\triangleq (q_1,q_2,\dots,q_N)$. We denote the ISO's problem in the Cournot model of competition as $\mathcal{ISO}^c(\mathbf{q};\mathcal{D})$. Here the price is determined trivially from the demand curve. More generally the price may be a solution of some optimization problem denoted by $\mathcal{S}(\mathbf{q})$ as given in the introduction. Let $C_i$ be the cost for producing $q_i$ quantity of electricity for supplier $i$ and $Q_i$ be the maximum production capacity of supplier $i$. Given $q^{-i}$, the decision of rivals, the supplier $i$'s problem in the Cournot model of competition is given by the problem denoted by  $P^c_i(q^{-i})$.  
$$
\maxproblem{$P^c_i(q^{-i})$}
{{q}_i}
{p{q}_i - C_i({q}_i)}
{\begin{array}{l@{\ }c@{\ }l}
	0 \leq {q}_i \leq Q_i, \\p \in \SOL [\mathcal{ISO}^c(\mathbf{q};\mathcal{D})] .
	\end{array}}
$$
It can be seen that this game has a structure of a price-coupling game. That is, the part of the objective function which is dependent on the price is the revenue $p.{q}_i$ for player $i$ and this part has the same form for all players and the price $p$ which is determined by the ISO is identical for all the players. Also, the term which is independent of price is the cost of generation which may vary for players. However this game has a structure which is distinct from the structure discussed in the introduction. These distinctions will matter in the results and are discussed more precisely in Section \ref{sec:price_anticipative}.

\section{{Price-Anticipative Formulation}} \label{sec:price_anticipative}
In this section we consider the price-anticipative formulation of a price-coupling game. As mentioned in Section \ref{sec:introduction}, in a price-anticipative formulation, the price-coupling game is modelled as a multi-leader single-follower game with players as the leaders and the price-determining player as the follower. 
For player $i \in \mathcal{N}$, consider a price-coupling game with the player $i$'s problem denoted by $P_i(x^{-i})$. In the price-anticipative formulation, the player $i$ conjectures the price determined by the price-determining player. Hence, we denote the price as $p_i$, which is dependent on player $i$. We define the  feasible set of player $i$ in price-anticipative formulation as $\Omega_i(x^{-i}) \triangleq \{(x_i,p_i)| x_i \in {X}_i, p_i \in \SOL[\mathcal{S}(x)] \}.$ With these considerations, the player $i$'s problem in price-anticipative formulation can be rewritten as $P^{a\mathbf{1}}_i(x^{-i})$. 
$$
\maxproblem{$P^{a\mathbf{1}}_i(x^{-i})$}
{(x_i, p_i) \in \Omega_i(x^{-i})}
{h(p_i,x)+g_i(x).}
{x_i \in {X}_i, p \in SOL[\mathcal{S}(x)]. }
$$
We use $a\mathbf{1}$ in $P^{a \mathbf{1}}_i(x^{-i})$ to identify that this is the first class of price-coupling games that we consider in the price-anticipative formulation. We use $\mathcal{E}_1$ to denote the class of price-coupling games in the price-anticipative formulation with player $i$'s problem given by  $P^{a\mathbf{1}}_i(x^{-i})$.

\subsection{Existence of equilibria for game $\mathcal{E}_1$}
Define
$\mathcal{F}^1 \triangleq \{(x,p)|x_i \in {X}_i, i \in \mathcal{N}, p \in \SOL[\mathcal{S}(x)] \}.
$
We define an equilibrium for game $\mathcal{E}_1$ as follows: 
\begin{definition}\label{def:price_anticipative}
	A point $(x^*,p^*) \in \mathcal{F}^1$ is said to be an equilibrium for game $\mathcal{E}_1$, if $\forall i \in \mathcal{N}$,
	\begin{align*} 
	{h(p^*,x^{*})+g_i(x^{*})} \geq {h(p_i,x_i,x^{-i*})+g_i(x_i,x^{-i*})}, \quad \forall (x_i,p_i) \in \Omega_i(x^{-i*}).
	\end{align*}
\end{definition}

To show the existence of an equilibrium we require that the functions $g_i, i \in \mathcal{N}$ admit a potential function. We recall the definition of a potential function from \cite{monderer96potential} which is given as follows:
\begin{definition}{\label{def:potential_function}}
	The functions $g_i : X \rightarrow \mathbb{R}, i \in \mathcal{N}$ is said to admit a potential function $\pi: X \rightarrow \mathbb{R}$ if $\forall i \in \mathcal{N}$,
	\begin{align*}
	g_i(\tilde{x}_{i},x^{-i}) - g_i(\hat{x}_{i},x^{-i}) = \pi(\tilde{x}_{i},x^{-i}) - \pi(\hat{x}_{i},x^{-i}), \quad \forall x^{-i}\in  X^{-i}, \forall (\tilde{x}_{i},\hat{x}_{i}) \in X_i.
	\end{align*}
\end{definition}
In order to show the existence of equilibria for the class of games $\mathcal{E}_1$,  we need a result which is given by the following lemma.
\begin{lemma}\label{lemma:1}
	A point $(x_i,p)$ is feasible for player $i$'s problem $P^{a\mathbf{1}}_i (x^{-i})$ if and only if $(x,p) \in \mathcal{F}^1$. That is,
	$$(x_i,p) \in \Omega_i(x^{-i}) \iff (x,p) \in \mathcal{F}^1.$$
\end{lemma}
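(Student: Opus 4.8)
The plan is to prove the equivalence by directly unpacking the two set definitions and observing that they impose exactly the same conditions once the fixed rival profile $x^{-i}$ is required to be feasible. Throughout I work with a fixed $x^{-i} \in X^{-i}$, i.e.\ $x_j \in X_j$ for every $j \neq i$; this is the standing assumption under which player $i$'s problem $P^{a\mathbf{1}}_i(x^{-i})$ is posed. The central observation is that the price constraint $p \in \SOL[\mathcal{S}(x)]$ is written in terms of the entire decision tuple $x = (x_i, x^{-i})$ in both $\Omega_i(x^{-i})$ and $\mathcal{F}^1$, so the coupling through $\mathcal{S}(x)$ is already baked into the per-player feasible set.

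For the forward implication, suppose $(x_i,p) \in \Omega_i(x^{-i})$. By definition this means $x_i \in X_i$ and $p \in \SOL[\mathcal{S}(x)]$, where $x = (x_i,x^{-i})$. Combining $x_i \in X_i$ with the standing feasibility $x_j \in X_j$ for $j \neq i$ yields $x_j \in X_j$ for all $j \in \mathcal{N}$; together with $p \in \SOL[\mathcal{S}(x)]$ this is precisely the defining condition of $\mathcal{F}^1$, so $(x,p) \in \mathcal{F}^1$.

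For the converse, suppose $(x,p) \in \mathcal{F}^1$. Then $x_j \in X_j$ for all $j \in \mathcal{N}$ --- in particular $x_i \in X_i$ --- and $p \in \SOL[\mathcal{S}(x)]$. Reading off the $i$-th coordinate together with the price, these two facts are exactly the requirements $x_i \in X_i$ and $p \in \SOL[\mathcal{S}(x)]$ that define membership in $\Omega_i(x^{-i})$, hence $(x_i,p) \in \Omega_i(x^{-i})$.

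There is no genuine obstacle here: the lemma is a bookkeeping statement whose content is that the nonconvex, coupled price constraint $p \in \SOL[\mathcal{S}(x)]$ is identical whether it is viewed from player $i$'s problem or from the global feasible set $\mathcal{F}^1$. The only point requiring care is to make the standing assumption $x^{-i} \in X^{-i}$ explicit, without which the forward direction would fail; it is this assumption that lets the local condition $x_i \in X_i$ be promoted to the global condition $x_j \in X_j$ for all $j \in \mathcal{N}$. The value of the lemma is conceptual rather than technical: it shows that each leader's feasible region, though defined locally in $x_i$, in fact certifies global membership in $\mathcal{F}^1$, which is what subsequently enables the potential-function argument for the existence of an equilibrium of $\mathcal{E}_1$.
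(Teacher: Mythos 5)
Your proof is correct and follows essentially the same route as the paper's: both directions are established by unpacking the definitions of $\Omega_i(x^{-i})$ and $\mathcal{F}^1$, using the standing assumption $x^{-i}\in X^{-i}$ to combine $x_i\in X_i$ into $x\in X$ for the forward implication and splitting $x\in X$ into coordinates for the converse. Your explicit flagging of where the assumption $x^{-i}\in X^{-i}$ is needed is a slight improvement in exposition over the paper's version, but the argument is the same.
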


\begin{proof}
	``$\Rightarrow$"
	For a given $x^{-i} \in X^{-i}$, consider a point $(x_i,p) \in \Omega_i(x^{-i})$. That means, $x_i \in X_i, p \in \SOL[\mathcal{S}(x)]$. Now by combining $x_i \in X_i$ and $x^{-i} \in X^{-i}$ we can rewrite the above equations as $x \in X, p \in \SOL[\mathcal{S}(x)]$. Hence $(x,p) \in \mathcal{F}^1$.
	
	``$\Leftarrow$" Suppose $(x,p) \in \mathcal{F}^1$. That is, $x \in X, p \in \SOL[\mathcal{S}(x)]$. For some $i\in \mathcal{N}$, we separate the decisions of player $i$  and adversaries as $x_i \in X_i$ and $x^{-i} \in X^{-i}$. For some fixed  $x^{-i}$, the equations can be rewritten as $x_i \in X_i, p \in \SOL[\mathcal{S}(x)]$. Hence, $(x_i,p)\in \Omega_i (x^{-i})$.
\end{proof}
Now for the game $\mathcal{E}_1$, consider the following optimization problem denoted by $\mathbb{P}^1$.
$$
\maxproblem{$\mathbb{P}^1$}
{(x, p) \in \mathcal{F}^1}
{h(p,x)+ \pi(x).}
{(x, p) \in \mathcal{F}^1. }
$$	
Theorem $\ref{th:price_anticipative1}$ gives a relation of an equilibrium of the game $\mathcal{E}_1$ to the solution of the optimization problem $\mathbb{P}^1$. From this relation, we provide conditions for the existence of an equilibria for game $\mathcal{E}_1$ in Corollary $\ref{Coro:1}$.
\begin{theorem} \label{th:price_anticipative1}
	Consider the game denoted by $\mathcal{E}_1$. Suppose the functions $g_i, i \in \mathcal{N}$ admit a potential function $\pi$. For this game, consider an optimization problem denoted by $\mathbb{P}^1$. Suppose $(x^*,p^*) \in \mathcal{F}^1$ is a maximizer of the problem $\mathbb{P}^1$, then $(x^*,p^*) \in \mathcal{F}^1$ is also an equilibrium of the game $\mathcal{E}_1$.
\end{theorem}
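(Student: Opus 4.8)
The plan is to exploit the potential-function structure to show that any global maximizer of $\mathbb{P}^1$ automatically satisfies the one-sided optimality conditions that define an equilibrium of $\mathcal{E}_1$. First I would fix an arbitrary maximizer $(x^*,p^*) \in \mathcal{F}^1$ of $\mathbb{P}^1$, select an arbitrary player $i \in \mathcal{N}$, and take an arbitrary admissible deviation $(x_i,p_i) \in \Omega_i(x^{-i*})$. The first key step is to verify that this deviation is a feasible point of the optimization problem $\mathbb{P}^1$: by Lemma~\ref{lemma:1}, membership $(x_i,p_i) \in \Omega_i(x^{-i*})$ is equivalent to $\big((x_i,x^{-i*}),p_i\big) \in \mathcal{F}^1$, so the deviated tuple indeed lies in the feasible region over which $\mathbb{P}^1$ is maximized. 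This is precisely the role Lemma~\ref{lemma:1} plays in handling the coupled-constraint structure induced by $\SOL[\mathcal{S}(\cdot)]$.

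Next I would invoke the global optimality of $(x^*,p^*)$ against this particular feasible competitor, which yields
\[
h(p^*,x^*) + \pi(x^*) \geq h(p_i,x_i,x^{-i*}) + \pi(x_i,x^{-i*}).
\]
The central maneuver is then to replace the potential-function differences by the corresponding differences in player $i$'s private objective $g_i$. Since only player $i$'s component is varied while the rivals' block is held fixed at $x^{-i*}$, the defining property of the potential function (Definition~\ref{def:potential_function}), applied with $\tilde x_i = x_i$, $\hat x_i = x_i^*$, and $x^{-i}=x^{-i*}$, gives
\[
\pi(x_i,x^{-i*}) - \pi(x^*) = g_i(x_i,x^{-i*}) - g_i(x^*).
\]
Substituting this identity into the optimality inequality and rearranging produces exactly the equilibrium condition $h(p^*,x^*) + g_i(x^*) \geq h(p_i,x_i,x^{-i*}) + g_i(x_i,x^{-i*})$ of Definition~\ref{def:price_anticipative}.

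Because the player $i$ and the deviation $(x_i,p_i)$ were arbitrary, this establishes the equilibrium inequality for every player, completing the argument. I do not expect a genuine obstacle here: this is the easy (sufficiency) direction of the standard potential-game correspondence, and no converse is claimed. The only two points that require care are that Lemma~\ref{lemma:1} is what guarantees a unilateral deviation remains admissible within $\mathbb{P}^1$, and that the potential identity must be applied with the rivals' block frozen at $x^{-i*}$, matching precisely the unilateral-deviation form in which $\pi$ is defined; the common price-dependent term $h$ passes through unchanged because it is identical across all players.
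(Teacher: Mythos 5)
Your proposal is correct and follows essentially the same route as the paper's own proof: both arguments hinge on Lemma~\ref{lemma:1} to identify unilateral deviations in $\Omega_i(x^{-i*})$ with feasible points of $\mathbb{P}^1$ having rivals frozen at $x^{-i*}$, then apply the global optimality of $(x^*,p^*)$ and the defining identity of the potential function to swap $\pi$-differences for $g_i$-differences. The only cosmetic difference is the direction of traversal (you start from a deviation and embed it into $\mathcal{F}^1$, while the paper starts from the optimality inequality over $\mathcal{F}^1$ and restricts it), which changes nothing substantive.
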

\begin{proof}
	Suppose $(x^*,p^*) \in \mathcal{F}^1$ is a maximizer of the problem $\mathbb{P}^1$. Then,
	$$
	{h(p^*,x^{*})+\pi(x^{*})} \geq  { h(p,x)+\pi(x)}, \quad \forall (x,p) \in \mathcal{F}^1.
	$$
	That is, for some $i \in \mathcal{N}$,	
	$$
	{h(p^*,x^{*})+\pi(x^{*})} \geq  { h(p,x_i,x^{-i})+\pi(x_i,x^{-i})}, \forall (x,p) \in \mathcal{F}^1.
	$$
	These inequalities hold true even if $x^{-i}= x^{-i*}$. Thus for $ i \in \mathcal{N}$ we can rewrite the inequalities as,
	\begin{align*}
	{h(p^*,x^{*})+\pi(x^{*})} \geq  { h(p,x_i,x^{-i*})+\pi(x_i,x^{-i*})}, \quad  \forall (x_i,x^{-i*},p) \in \mathcal{F}^1.
	\end{align*}
	By Lemma $\ref{lemma:1}$, for $ i \in \mathcal{N}$,
	\begin{align*}
	{h(p^*,x^{*})+\pi(x^{*})} \geq  { h(p,x_i,x^{-i*})+\pi(x_i,x^{-i*})}, \quad  \forall (x_i,p) \in \Omega_i(x^{-i*}).
	\end{align*}
	Since $\pi$ is a potential function for $g_i, i \in \mathcal{N}$,
	\begin{align*}
	{h(p^*,x^{*})+g_i(x^{*})} \geq  { h(p,x_i,x^{-i*})+g_i(x_i,x^{-i*})}, \quad  \forall (x_i,p) \in \Omega_i(x^{-i*}).
	\end{align*}
	Since we consider an arbitrary $i \in \mathcal{N}$, this condition is valid for all $i \in \mathcal{N}$. Hence $(x^*,p^*) \in \mathcal{F}^1$ is an equilibrium of the price-coupling game $\mathcal{E}_1$.
\end{proof}
Since the maximizer of the problem $\mathbb{P}^1$ is an equilibrium of game $\mathcal{E}_1$, condition for the existence of equilibria is given as the condition for the existence of a solution for the optimization problem  $\mathbb{P}^1$ which is given in the following corollary.
\begin{corollary} \label{Coro:1}
	For a price-coupling game $\mathcal{E}_1$ with $g_i, i \in \mathcal{N}$ admitting a potential function $\pi$, admits an equilibrium if the function, ${h+ \pi}$ is continuous and the set $\mathcal{F}^1$ is non empty and compact. The set  $\mathcal{F}^1$ is compact if the {set $X$ is compact, the function $f$ is continuous and the set valued function $\mathcal{M}$ is continuous and uniformly bounded.}
\end{corollary}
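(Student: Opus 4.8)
The plan is to split the argument into two independent pieces matching the two sentences of the statement. For the existence assertion, almost all the work is already done by Theorem~\ref{th:price_anticipative1}, which guarantees that any maximizer of $\mathbb{P}^1$ is an equilibrium of $\mathcal{E}_1$; thus it suffices to produce such a maximizer. Since the objective $h+\pi$ is continuous and the feasible region $\mathcal{F}^1$ is assumed non-empty and compact, the Weierstrass extreme value theorem furnishes a maximizer, and an equilibrium follows at once. This step is routine. The substantive piece is to verify the stated sufficient conditions for compactness of $\mathcal{F}^1 = \{(x,p)\mid x\in X,\ p\in\SOL[\mathcal{S}(x)]\}$.

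Boundedness is immediate: since $\SOL[\mathcal{S}(x)]\subseteq\mathcal{M}(x)$ and $\mathcal{M}$ is uniformly bounded, the $p$-component of every point of $\mathcal{F}^1$ lies in a fixed bounded set, while the $x$-component lies in the bounded set $X$. The crux is therefore closedness, which amounts to showing that the graph of the solution map $x\mapsto\SOL[\mathcal{S}(x)]$ is closed. I would argue sequentially: take $(x_k,p_k)\in\mathcal{F}^1$ with $(x_k,p_k)\to(\bar x,\bar p)$ and show $(\bar x,\bar p)\in\mathcal{F}^1$. Because $X$ is compact it is closed, so $\bar x\in X$, and it remains to establish feasibility $\bar p\in\mathcal{M}(\bar x)$ and optimality $f(\bar x,\bar p)\leq f(\bar x,p)$ for all $p\in\mathcal{M}(\bar x)$.

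Feasibility follows from the closed-graph property of $\mathcal{M}$, a consequence of its upper semicontinuity (part of continuity) together with uniform boundedness and closed values: since $p_k\in\mathcal{M}(x_k)$, $x_k\to\bar x$, $p_k\to\bar p$, the limit satisfies $\bar p\in\mathcal{M}(\bar x)$. For optimality, fix an arbitrary $p\in\mathcal{M}(\bar x)$; by the lower semicontinuity of $\mathcal{M}$ there is a sequence $p_k'\in\mathcal{M}(x_k)$ with $p_k'\to p$. The optimality of $p_k$ in $\mathcal{S}(x_k)$ gives $f(x_k,p_k)\leq f(x_k,p_k')$, and passing to the limit using continuity of $f$ yields $f(\bar x,\bar p)\leq f(\bar x,p)$. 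As $p\in\mathcal{M}(\bar x)$ was arbitrary, $\bar p\in\SOL[\mathcal{S}(\bar x)]$, so $(\bar x,\bar p)\in\mathcal{F}^1$; hence $\mathcal{F}^1$ is closed, and being bounded, compact.

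The main obstacle is exactly this closed-graph property, which is the content of Berge's maximum theorem and the only place where the \emph{full} continuity of $\mathcal{M}$ is used: upper semicontinuity secures feasibility of the limit, while lower semicontinuity is precisely what allows an arbitrary competitor $p\in\mathcal{M}(\bar x)$ to be approximated by feasible points $p_k'\in\mathcal{M}(x_k)$, thereby transferring optimality to the limit. As a side check, one may also note that $\SOL[\mathcal{S}(x)]$ is non-empty for each $x$ (so that the non-emptiness hypothesis on $\mathcal{F}^1$ is attainable), which again follows from Weierstrass applied to the continuous $f(x,\cdot)$ over the compact set $\mathcal{M}(x)$.
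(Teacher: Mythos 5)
Your proposal is correct and follows essentially the same route as the paper: the existence assertion via Theorem~\ref{th:price_anticipative1} plus Weierstrass, and compactness of $\mathcal{F}^1$ split into boundedness (from uniform boundedness of $\mathcal{M}$ and compactness of $X$) and closedness of the solution map. The only difference is that the paper obtains closedness by citing Theorem~8 of Hogan's point-to-set maps survey, whereas you prove that result inline with the standard sequential argument (upper semicontinuity for feasibility of the limit, lower semicontinuity to transfer optimality), which is exactly the content of the cited theorem.
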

{We omit the detailed proof, but we give an outline of the proof. The first part follows from Theorem $\ref{th:price_anticipative1}$ and Weirstrass theorem. By Theorem $8$ in \cite{hogan73point}, since $\mathcal{M}$ and ${f}$ are continuous, $\mathcal{F}^1$ is closed. Also, $\mathcal{F}^1$ is bounded since $\mathcal{M}$ is uniformly bounded and $X$ is compact. Hence $\mathcal{F}^1$ is compact.}
Thus, to guarantee the existence of an equilibrium for a price-coupling game $\mathcal{E}_1$, one can check the conditions given in the Corollary $\ref{Coro:1}$. 

\begin{remark}
	Suppose the player dependent term $g_i$ in the objective function of a player is dependent only on $x_i$, and is independent of $x^{-i}$, then a potential function of the terms $g_i, i \in \mathcal{N}$ is given by $\pi = \sum_{i \in \mathcal{N}}^{} g_i$.
\end{remark}

\subsection{Shared constraints and existence of equilibria for game $\mathcal{E}_2$}

In this section, we consider the price-coupling game in price-anticipative formulation with the player $i$'s problem given by $P^{a\mathbf{2}}_i(x^{-i})$. The difference of $P^{a\mathbf{2}}_i(x^{-i})$ from $P^{a\mathbf{1}}_i(x^{-i})$ is that the identical function part $h$ is dependent on $p$ and $x_i$ and is independent of $x^{-i}$. We use $\mathcal{E}_2$ to denote this class of price-coupling games.
$$
\maxproblem{$P^{a\mathbf{2}}_i(x^{-i})$}
{(x_i, p_i) \in \Omega_i(x^{-i})}
{h(p_i,x_i)+g_i(x).}
{x_i \in {X}_i, p \in SOL[\mathcal{S}(x)]. }
$$ An equilibrium for the price-coupling game $\mathcal{E}_2$ is defined as follows:
\begin{definition}\label{def:price_anticipative3}
	A point $(x^*,p^*) \in \mathcal{F}^1$ is said to be an equilibrium for the game $\mathcal{E}_2$, if $\forall i \in \mathcal{N}$,
	\begin{align*} 
	{h(p^*,x_i^{*})+g_i(x^{*})} \geq {h(p_i,x_i)+g_i(x_i,x^{-i*})}, \quad \forall (x_i,p_i) \in \Omega_i(x^{-i*}).
	\end{align*}
\end{definition}
Recall that we have considered $p_i$ as the price conjectured by player $i$. Thus a player can make a conjecture independent of other players. To give a result for the existence of equilibria for the game $\mathcal{E}_2$, we modify the game with an additional constraint. The additional constraint in a player's problem is that the price conjectured by the players need to be consistent, i.e. $p_i=p_j, \forall i,j \in \mathcal{N}$. We call this constraint as the consistent conjecture of the price. With this additional constraint, we define the feasible set of a player $i$ as the set denoted by $\bar{\Omega}_i(x^{-i},p^{-i})$.
\begin{align}\label{eq:feasible_set_modified}
\bar{\Omega}_i(x^{-i},p^{-i}) = \{(x_i,p_i)| x_i \in X_i, p_i \in \SOL[\mathcal{S}(x)], p_i=p_j, \forall j \in \mathcal{N}\backslash\{i\}\}.
\end{align}
Let $\bar{\Omega}(x,p) \triangleq \prod\limits_{i=1}^{N} \bar{\Omega}_i(x^{-i},p^{-i}).$ With the additional constraint of the consistent conjecture of price, we redefine the player $i$'s problem in game $\mathcal{E}_2$ as $\bar{P}^{a\mathbf{2}}_i(x^{-i},p^{-i})$ which is given as follows:
$$
\maxproblem{$\bar{P}^{a\mathbf{2}}_i(x^{-i},p^{-i})$}
{(x_i, p_i) \in \bar{\Omega}_i(x^{-i},p^{-i})}
{h(p_i,x_i)+g_i(x).}
{x_i \in {X}_i, p \in SOL[\mathcal{S}(x)]. }
$$
We call this modified game as a \textit{price-coupling game with consistent price conjectures} and denote it by $\bar{\mathcal{E}}_2$. For defining an equilibrium in the game $\bar{\mathcal{E}}_2$, consider the set $\mathcal{F}^2$ which is defined as follows. 
\begin{align*}
\mathcal{F}^2 \triangleq \{(x,p_1,\dots,p_N)|x \in {X}, p_i \in \SOL[\mathcal{S}(x)], i \in \mathcal{N}, p_i=p_j, \forall i,j \in \mathcal{N} \}.
\end{align*}
Now we define an equilibrium in the game $\bar{\mathcal{E}}_2$ as follows.
\begin{definition}\label{def:price_anticipative}
	A point $(x^*,p^*) \in \mathcal{F}^2$ is said to be an equilibrium of the game $\bar{\mathcal{E}}_2$ if $\forall i \in \mathcal{N}$,
	\begin{align*} 
	{h(p^*,x_i^{*})+g_i(x^{*})} \geq {h(p_i,x_i)+g_i(x_i,x^{-i*})}, \forall (x_i,p_i) \in \bar{\Omega}_i(x^{-i*},p^{-i*}).	\end{align*}
\end{definition}
The following lemma gives a relation between the original price-coupling game $\mathcal{E}_2$ and the price-coupling game with consistent price conjecture $\bar{\mathcal{E}}_2$.
\begin{lemma}{\label{lemma:relation}}
	Consider a game $\mathcal{E}_2$. Suppose $(x^*,p^*) \in \mathcal{F}^1$ is an equilibrium of this game. Then $(x^*,p^*) \in \mathcal{F}^2$ is also an equilibrium of the game $\bar{\mathcal{E}}_2$ \cite{kulkarni2013consistency}.
\end{lemma}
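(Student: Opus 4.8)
The plan is to derive the equilibrium condition for $\bar{\mathcal{E}}_2$ directly from that of $\mathcal{E}_2$, exploiting the single structural fact that imposing the consistency constraint $p_i = p_j$ can only \emph{shrink} each player's feasible set. Concretely, there are two things to establish: first, that the point $(x^*,p^*)$ belongs to $\mathcal{F}^2$ (so that it is a legitimate candidate equilibrium of $\bar{\mathcal{E}}_2$); and second, that it satisfies the defining inequality of an equilibrium of $\bar{\mathcal{E}}_2$.

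For the membership claim, I would start from $(x^*,p^*) \in \mathcal{F}^1$, which by the definition of $\mathcal{F}^1$ gives $x^* \in X$ together with a \emph{single} price $p^* \in \SOL[\mathcal{S}(x^*)]$. I then identify this scalar price with the constant conjecture tuple $(p^*,\ldots,p^*)$, i.e. I set $p_i^* = p^*$ for every $i \in \mathcal{N}$. Under this identification the consistency requirement $p_i^* = p_j^*$ holds trivially for all $i,j \in \mathcal{N}$, while $p_i^* = p^* \in \SOL[\mathcal{S}(x^*)]$ for each $i$; comparing against the definition of $\mathcal{F}^2$ then shows $(x^*,p^*) \in \mathcal{F}^2$. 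This is exactly the (notationally compressed) identification that the lemma statement already adopts when it writes $(x^*,p^*)\in\mathcal{F}^2$.

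For the inequality, the key observation is the inclusion $\bar{\Omega}_i(x^{-i*},p^{-i*}) \subseteq \Omega_i(x^{-i*})$ for every $i \in \mathcal{N}$. This follows directly from comparing \eqref{eq:feasible_set_modified} with the definition of $\Omega_i(x^{-i})$: the set $\bar{\Omega}_i$ is obtained from $\Omega_i$ by appending the extra constraint $p_i = p_j$ for all $j \neq i$, which at the equilibrium conjectures $p^{-i*}=(p^*,\ldots,p^*)$ reads $p_i = p^*$; appending a constraint yields a subset. Since $(x^*,p^*)$ is an equilibrium of $\mathcal{E}_2$, Definition~\ref{def:price_anticipative3} gives $h(p^*,x_i^*)+g_i(x^*) \geq h(p_i,x_i)+g_i(x_i,x^{-i*})$ for \emph{all} $(x_i,p_i) \in \Omega_i(x^{-i*})$. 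Restricting the quantifier from $\Omega_i(x^{-i*})$ to the smaller set $\bar{\Omega}_i(x^{-i*},p^{-i*})$ preserves the inequality and produces precisely the condition defining an equilibrium of $\bar{\mathcal{E}}_2$. As the argument holds for an arbitrary $i \in \mathcal{N}$, the point $(x^*,p^*)$ is an equilibrium of $\bar{\mathcal{E}}_2$.

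I do not anticipate a genuine obstacle: the proof is essentially a quantifier restriction over a nested pair of feasible sets. The only points requiring care are bookkeeping at the transition $\mathcal{F}^1 \to \mathcal{F}^2$ (making the identification of the single equilibrium price $p^*$ with the constant tuple explicit) and confirming that the consistency constraint defining $\bar{\Omega}_i$ is evaluated at the equilibrium conjectures $p^{-i*}$, so that it is automatically satisfied by $p_i=p^*$ and does not exclude the candidate point itself. It is worth emphasizing that this direction works precisely because the consistency (shared) constraint only \emph{removes} admissible deviations; the reverse implication, that an equilibrium of $\bar{\mathcal{E}}_2$ is an equilibrium of $\mathcal{E}_2$, does \emph{not} follow from this containment argument and need not hold.
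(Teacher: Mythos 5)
Your proof is correct and follows the same idea as the paper's own (one-sentence) argument: at an equilibrium the conjectured prices coincide, so the point lies in $\mathcal{F}^2$, and the equilibrium inequality survives because $\bar{\Omega}_i(x^{-i*},p^{-i*}) \subseteq \Omega_i(x^{-i*})$. You simply make explicit the bookkeeping (identifying $p^*$ with the constant tuple and the quantifier restriction) that the paper leaves implicit.
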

\begin{proof}
	Since at an equilibrium of the game $\mathcal{E}_2$, the price conjectured by the players are the same, an equilibrium of $\mathcal{E}_2$ is also an equilibrium of $\bar{\mathcal{E}}_2$ . 
\end{proof}

To give conditions for the existence of an equilibria we use the \textit{shared constraint} structure of the price-coupling game with consistent conjecture \cite{abraham2017newresults},\cite{kulkarni2013consistency}. In a coupled constraint game, the action of a player restricts the action set of other players. Moreover, if the coupled action set in a game is same for all players, then we call it as a shared constraint game. In our case, the set mapping $\bar{\Omega}(x,p)$ is said to be a shared constraint mapping if there exists a set $\mathcal{W}$ such that,
\begin{equation*}
({x}_{i},p_i) \in \bar{\Omega}_{i}({x^{-i},p^{-i}}) \Longleftrightarrow (x,p)\in \mathcal{W}.
\end{equation*}
Shared constraint games were introduced by Rosen~\cite{rosen65existence} and have attracted significant study over the last decade~\cite{facchinei07ongeneralized}, 
\cite{kulkarni09refinement}, \cite{kulkarni2014shared}, \cite{kulkarni2015existence}. The following lemma shows that for the game $\bar{\mathcal{E}}_2$, the set mapping $\bar{\Omega}(x,p)$ has a shared constraint structure.

\begin{lemma}\label{lemma:share_constraint}
	Consider a price-coupling game with consistent price conjectures $\bar{\mathcal{E}}_2$. 	There exists a shared constraint set $\mathcal{F}^2$ such that a point $(x_i,p_i)$ is feasible for player $i$'s problem $\bar{P}^{a\mathbf{2}}_i (x^{-i},p^{-i})$ if and only if $(x,p) \in \mathcal{F}^2$. That is,
	$(x_i,p_i) \in \bar{\Omega}_i(x^{-i},p^{-i}) \iff (x,p) \in \mathcal{F}^2,$ where, $\mathcal{F}^2= (X \times \mathcal{B}) \cap \mathcal{H}$, where $$
	\mathcal{B} \triangleq \{(p_1,\dots,p_N)| p_i=p_j, \forall i,j \in \mathcal{N}\},$$$$\mathcal{H} \triangleq \{(x,p_1,\dots,p_N)| x \in X, p_i \in \SOL[\mathcal{S}(x)]\}.$$
\end{lemma}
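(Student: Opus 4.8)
The plan is to prove the stated equivalence directly in both directions for an arbitrary fixed $i \in \mathcal{N}$ --- exactly as in Lemma~\ref{lemma:1}, but additionally tracking the consistency requirement $p_i = p_j$ --- and then to verify the set identity $\mathcal{F}^2 = (X \times \mathcal{B}) \cap \mathcal{H}$ by comparing defining conditions. Since the set $\mathcal{F}^2$ produced by this equivalence does not depend on $i$, exhibiting it is precisely what it means for $\bar{\Omega}$ to be a shared constraint mapping (with $\mathcal{W} = \mathcal{F}^2$ in the definition above).

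For the direction ``$\Rightarrow$'', I would fix $i$ and a given $x^{-i} \in X^{-i}$, and start from $(x_i,p_i) \in \bar{\Omega}_i(x^{-i},p^{-i})$, so that $x_i \in X_i$, $p_i \in \SOL[\mathcal{S}(x)]$, and $p_i = p_j$ for every $j \neq i$. Combining $x_i \in X_i$ with $x^{-i} \in X^{-i}$ yields $x \in X$, just as in Lemma~\ref{lemma:1}. The only genuinely new step is to observe that the relations $p_i = p_j$ for all $j \neq i$ force every component of $p$ to coincide with $p_i$; hence $p_k = p_\ell$ for all $k,\ell \in \mathcal{N}$, and since each such $p_k$ equals $p_i \in \SOL[\mathcal{S}(x)]$, we also obtain $p_k \in \SOL[\mathcal{S}(x)]$ for every $k$. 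These are exactly the three defining conditions of $\mathcal{F}^2$, so $(x,p) \in \mathcal{F}^2$.

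For ``$\Leftarrow$'', I would take $(x,p) \in \mathcal{F}^2$, so that $x \in X$, every $p_k \in \SOL[\mathcal{S}(x)]$, and $p_k = p_\ell$ for all $k,\ell$. Fixing $i$ and splitting $x = (x_i,x^{-i})$ with $x_i \in X_i$ and $x^{-i} \in X^{-i}$, the membership $p_i \in \SOL[\mathcal{S}(x)]$ and the equalities $p_i = p_j$ for $j \neq i$ are immediate restrictions of these conditions, giving $(x_i,p_i) \in \bar{\Omega}_i(x^{-i},p^{-i})$.

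Finally, to confirm $\mathcal{F}^2 = (X \times \mathcal{B}) \cap \mathcal{H}$, I would read off that a point lies in the right-hand side iff $x \in X$, all coordinates of $p$ are equal (membership in $\mathcal{B}$), and each $p_i \in \SOL[\mathcal{S}(x)]$ (membership in $\mathcal{H}$), which reproduces verbatim the definition of $\mathcal{F}^2$. I do not anticipate a real obstacle: the argument is essentially bookkeeping, and the single point deserving attention is that the \emph{per-player} constraints $p_i = p_j,\ j \neq i$, once imposed across all $i$, aggregate into the \emph{global} consistency built into $\mathcal{F}^2$ --- this is exactly what permits a single, player-independent set to serve simultaneously as every player's feasible slice.
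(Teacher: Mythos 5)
Your proposal is correct and follows essentially the same route as the paper: both arguments are direct bookkeeping that rewrite the per-player feasible set $\bar{\Omega}_i(x^{-i},p^{-i})$ in terms of the conditions defining $(X\times\mathcal{B})\cap\mathcal{H}$ and observe that the resulting set is independent of $i$. Your explicit handling of how the per-player equalities $p_i=p_j$, $j\neq i$, propagate $p_i\in\SOL[\mathcal{S}(x)]$ to every component of $p$ is a slightly more careful rendering of a step the paper's proof leaves implicit, but it is the same argument.
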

\begin{proof}
	Let $i \in \mathcal{N}$ be an arbitrary player for game $\bar{\mathcal{E}}_2$.
	Consider the constraint set of player $i$ with the condition of consistent conjecture of price denoted by ${\bar{\Omega}_{i}}({x^{-i},p^{-i}})$ which is given in \eqref{eq:feasible_set_modified}.
	By using the defined sets $\mathcal{B}$ and $\mathcal{H}$, the set ${\bar{\Omega}_{i}}({x^{-i},p^{-i}})$ can be redefined as,
	\begin{align*}
	\bar\Omega_{i}({x^{-i},p^{-i}}) = \{(\hat{x}_{i},\hat{p}_{i}) | (\hat{x}_{i},x^{-i})\in X, (\hat{p}_{i},p^{-i})\in \mathcal{B}, (\hat{x}_{i},x^{-i},\hat{p}_{i},p^{-i}) \in \mathcal{H} \}.
	\end{align*}
	i.e., $({x}_{i},{p}_{i}) \in \bar\Omega_{i}({x^{-i},p^{-i}}) \Longleftrightarrow (x,p)\in (X\times \mathcal{B})\cap \mathcal{H}$, which is independent of $i$. Since the implication holds for each $i \in \mathcal{N}$,  $ ({x}_{i},{p}_{i}) \in \bar\Omega_{i}({x^{-i},p^{-i}}) \Longleftrightarrow (x,p)\in \mathcal{F}^2,\forall i \in \mathcal{N},$ as required for a shared constraint. 
\end{proof}
Thus $\mathcal{F}^2$ is a shared constraint for the game $\bar{\mathcal{E}}_2$. For the game $\bar{\mathcal{E}}_2$, consider the following problem denoted by  $\mathbb{P}^2.$
$$
\maxproblem{$\mathbb{P}^2$}
{(x,p) \in \mathcal{F}^2}
{ \sum\limits_{i\in \mathcal{N}} \bigl[h(p_i,x_i)\bigr]+ \pi(x).}
{(x,p) \in \mathcal{F}^2 }
$$
The following theorem relates the solution of the  problem $\mathbb{P}^2$ to an equilibrium of the game $\bar{\mathcal{E}}_2$.
\begin{theorem} \label{theorem:potential}
	Consider the game $\bar{\mathcal{E}}_2$. Suppose $g_i,i\in \mathcal{N}$ admits a potential function $\pi$. For this game, consider the optimization problem denoted by $\mathbb{P}^2$. Suppose $(x^*,p^*) \in \mathcal{F}^2$ is a maximizer of the problem $\mathbb{P}^2$, then $(x^*,p^*) \in \mathcal{F}^2$ is also an equilibrium of $\bar{\mathcal{E}}_2$. {There exists an equilibrium for this game if the functions $\sum_{}^{}h + \pi$ and $f$ are continuous, the set $X$ is compact and the set valued function $\mathcal{M}$ is continuous and uniformly bounded.}
\end{theorem}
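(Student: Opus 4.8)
The plan is to follow the template of Theorem \ref{th:price_anticipative1}, replacing the single price term $h(p,x)$ by the separable sum $\sum_{i\in\mathcal{N}} h(p_i,x_i)$ and exploiting the consistency constraint together with the shared-constraint description of Lemma \ref{lemma:share_constraint}. The central observation I would use is that, in the consistent-conjecture game $\bar{\mathcal{E}}_2$, the map $(x,p)\mapsto \sum_{j\in\mathcal{N}} h(p_j,x_j)$ acts as a potential for the price-dependent parts $h(p_i,x_i)$: when player $i$ deviates unilaterally, only the $i$-th summand $h(p_i,x_i)$ can change, while every $h(p_j,x_j)$ with $j\neq i$ stays frozen at $(p_j^*,x_j^*)$. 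Combined with the hypothesis that $\pi$ is a potential for the $g_i$, the full objective $\sum_j h(p_j,x_j)+\pi(x)$ of $\mathbb{P}^2$ is a potential for each player's payoff $h(p_i,x_i)+g_i(x)$.

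Concretely, for the first (sufficiency) part I would start from global optimality of $(x^*,p^*)$ for $\mathbb{P}^2$, namely $\sum_j h(p_j^*,x_j^*)+\pi(x^*)\ge \sum_j h(p_j,x_j)+\pi(x)$ for all $(x,p)\in\mathcal{F}^2$. Fixing an arbitrary $i\in\mathcal{N}$ and restricting attention to points with $x^{-i}=x^{-i*}$ and $p^{-i}=p^{-i*}$, Lemma \ref{lemma:share_constraint} guarantees that these are exactly the feasible unilateral deviations, i.e. $(x_i,p_i)\in\bar{\Omega}_i(x^{-i*},p^{-i*})\iff (x,p)\in\mathcal{F}^2$. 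On $\bar{\Omega}_i(x^{-i*},p^{-i*})$ the consistency clause $p_i=p_j$ forces $p_i=p^*$, so the summands $h(p_j,x_j)$ for $j\neq i$ equal $h(p_j^*,x_j^*)$ and cancel from both sides of the optimality inequality. This leaves $h(p^*,x_i^*)+\pi(x^*)\ge h(p_i,x_i)+\pi(x_i,x^{-i*})$, and substituting the potential identity $\pi(x^*)-\pi(x_i,x^{-i*})=g_i(x^*)-g_i(x_i,x^{-i*})$ yields exactly the equilibrium inequality of Definition \ref{def:price_anticipative} for $\bar{\mathcal{E}}_2$. Since $i$ was arbitrary, $(x^*,p^*)$ is an equilibrium.

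For the existence part I would invoke the Weierstrass theorem: it suffices that the objective $\sum_j h + \pi$ be continuous and that $\mathcal{F}^2$ be nonempty and compact. Writing $\mathcal{F}^2=(X\times\mathcal{B})\cap\mathcal{H}$ as in Lemma \ref{lemma:share_constraint}, the set $\mathcal{B}$ is closed (it is cut out by the linear equalities $p_i=p_j$), and $\mathcal{H}$ is closed by Theorem 8 of \cite{hogan73point}, because the continuity of $f$ and of the uniformly bounded, continuous map $\mathcal{M}$ make the solution map $x\mapsto\SOL[\mathcal{S}(x)]$ closed. Compactness of $X$ together with the uniform boundedness of $\mathcal{M}$ (which bounds the prices) then makes $\mathcal{F}^2$ bounded, hence compact; assuming it is also nonempty, $\mathbb{P}^2$ attains its maximum, and the first part promotes that maximizer to an equilibrium.

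I expect the main obstacle to be the bookkeeping in the cancellation step, specifically justifying rigorously that a unilateral deviation freezes the off-diagonal terms $h(p_j,x_j)$, $j\neq i$. This is exactly where the consistent-conjecture modification is essential: without forcing $p_i=p^*$ on $\bar{\Omega}_i$, a deviation in $p_i$ need not fix $p_j$ for $j\neq i$, the separable sum would fail to be a valid potential, and the reduction to $\mathbb{P}^2$ would break. The compactness argument is comparatively routine, the only delicate point being the appeal to \cite{hogan73point} to obtain closedness of the solution-set graph $\mathcal{H}$.
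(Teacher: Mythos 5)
Your proposal is correct and follows essentially the same route as the paper: maximize the separable potential $\sum_j h(p_j,x_j)+\pi(x)$ over $\mathcal{F}^2$, restrict to unilateral deviations with $x^{-i}=x^{-i*}$, $p^{-i}=p^{-i*}$, invoke Lemma \ref{lemma:share_constraint} to identify these with $\bar{\Omega}_i(x^{-i*},p^{-i*})$, cancel the frozen off-diagonal terms, and apply the potential identity for the $g_i$; the existence part via Weierstrass, Hogan's closedness result and uniform boundedness of $\mathcal{M}$ matches the paper's (unproved) outline. Your explicit remark that the consistency clause pins $p_i$ to $p^*$ is a correct refinement the paper leaves implicit.
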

\begin{proof}
	Let $(x^*,p^*) \in \mathcal{F}^2$ be a maximizer of the problem $\mathbb{P}^2$. Then, $\forall (x,p) \in \mathcal{F}^2$,
	\begin{align*}
	{\sum\limits_{i\in \mathcal{N}}  \bigl[h(p_i^*,x_i^{*})\bigr]+  \pi(x^{*})} \geq  {\sum\limits_{i\in \mathcal{N}}  \bigl[h(p_i,x_i)\bigr]+  \pi(x)}.
	\end{align*}
	That is, for some $i \in \mathcal{N}$,	
	\begin{align*}
	{\sum\limits_{i\in \mathcal{N}}  \bigl[h(p_i^*,x_i^{*})\bigr]+  \pi(x^{*})} \geq h(p_i,x_i) + {\sum\limits_{j\in \mathcal{N} \backslash \{i\}}  \bigl[h(p_j,x_j)\bigr]+} { \pi(x_i,x^{-i})}, \quad  \forall (x_i,x^{-i},p_i,p^{-i}) \in \mathcal{F}^2.
	\end{align*}
	These inequalities hold true even if $x^{-i}= x^{-i*}$ and $p^{-i}= p^{-i*}$. Thus for $ i \in \mathcal{N}$ we can rewrite the inequalities as,
	\begin{align*}
	{\sum\limits_{i\in \mathcal{N}}  \bigl[h(p_i^*,x_i^{*})\bigr]+  \pi(x^{*})} \geq h(p_i,x_i) + {\sum\limits_{j\in \mathcal{N} \backslash \{i\}}  \bigl[h(p^*_j,x^*_j)\bigr]+} { \pi(x_i,x^{-i*})}, \quad \forall (x_i,x^{-i*},p_i,p^{-i*}) \in \mathcal{F}^2.
	\end{align*}
	By Lemma $\ref{lemma:share_constraint}, \forall i \in \mathcal{N}$
	and by cancelling equal terms on both sides, for $ i \in \mathcal{N}$,$\quad \forall (x_i,p_i) \in \Omega_i(x^{-i*},p^{-i*}),$
	$
	{h(p_i^*,x_i^{*})+  \pi(x^{*})} \geq h(p_i,x_i) +   \pi(x_i,x^{-i*}).
	$
	Since $\pi$ is a potential function for $g_i, i \in \mathcal{N}$,$\quad \forall (x_i,p_i)$,
	$
	{h(p_i^*,x_i^{*})+  g_i(x^{*})} \geq h(p_i,x_i) +   g_i(x_i,x^{-i*}).
	$
	Since we considered an arbitrary $i \in \mathcal{N}$, this condition is valid for all $i \in \mathcal{N}$. Hence $(x^*,p^*) \in \mathcal{F}^2$ is an equilibrium of the game $\bar{\mathcal{E}}_2$.
\end{proof}
\section{Price-taking Formulation} \label{sec:price_taking}
In this section, we consider the price-taking formulation of a price-coupling game. As discussed in Section \ref{sec:introduction}, in this formulation, a player makes the decision by considering the price $p$ as a given parameter. That is, given $p$ and $x^{-i}$, the player $i$'s problem in price-taking formulation has a structure of the problem $P^t_i(p,x^{-i})$ which is given as follows:
$$
\maxproblem{$P^t_i(p,x^{-i})$}
{x_i \in X_i}
{h(p,x)+g_i(x).}
{x_i \in {X}_i. }
$$
Similar to the case of price-anticipative formulation, here we use $t$ in $P^t_i$ to denote that the problem is player $i$'s problem in the price-\textit{taking} formulation. Even though the price $p$ is a given parameter for a player in the game, it is determined as a solution of the price-determining player problem $\mathcal{S}(x)$. We use $\mathcal{T}$ to denote this price-coupling game in price-taking formulation.
We define a set $\mathcal{A}$ as, $$\mathcal{A} \triangleq \{(x,p)|x \in X, p \in \mathcal{M}(x)\}.$$  Now we define an equilibrium for the game $\mathcal{T}$ as follows.

\begin{definition}\label{def:price_taking}
	A point $(x^*,p^*) \in \mathcal{A}$ is said to be an equilibrium of the game $\mathcal{T}$ if the following inequalities are satisfied $\forall i \in \mathcal{N}$,
	$$h(p^*,x^*)+g_i(x^*) \geq h(p^*,x_i,x^{-i*})+g_i(x_i,x^{-i*}), \forall x_i \in X_i,$$
	$$f(x^*,p^*) \leq f(x^*,p), \quad \forall p \in \mathcal{M}(x^*).$$
\end{definition}

As seen from the above definition, an equilibrium of the game $\mathcal{T}$ is defined an equilibrium of a game with $N+1$ players. Thus we call this formulation also as an $N+1$ player formulation. The additional player in this game is the price-determining player who decides the equilibrium price $p^*$. In the following sections, we study the existence of equilibria for the game $\mathcal{T}$ by categorizing into two classes depending on the presence or absence of the decision of players $x$ in the constraint set $\mathcal{M}$ of the price-determining player. In the first class of games, we consider the case where the constraint set $\mathcal{M}$ is independent of the decision of players. We denote this game as $\mathcal{T}_1$. In the second class of games, we consider the general case where the decision of players $x$ also constrains the feasible set $\mathcal{M}(x)$ of the price-determining player problem. 
\subsection{Game $\mathcal{T}_1$ : $\mathcal{M}$ is independent of $x$. }
Here we consider a price-coupling game in the price-taking formulation with the player $i$'s problem given by $P^t_i(p,x^{-i})$ and the price-determining player's problem is given by the following problem denoted by $\bar{\mathcal{S}}(x)$. 
$$
\problem{$\bar{\mathcal{S}}(x)$}
{p \in \mathcal{M}}
{f(x,p).}
{p \in \mathcal{M}}
$$
Note that the constraint of the price-determining player problem is independent of $x$. The Bertrand model of competition for suppliers in an electricity market is an example of such games. The suppliers' bids are the price of electricity that they are willing to supply which do not appear as a constraint in the ISO's problem. These games are easier to analyze because they do not have a coupled constraint structure as in the general case. The following lemma gives a result for the existence of an equilibrium in such class of games.

\begin{proposition} \label{lemma:T_1}
	Consider the game denoted by $\mathcal{T}_1$. Suppose the sets $X_i, i \in \mathcal{N}$ and the set valued function $\mathcal{M}$ are non-empty, convex and compact subset of some Euclidean space. Let the functions $f, h+g_i, i\in \mathcal{N}$ be continuous and $f(x,\cdot)$ be convex for all $x$ and $\forall i\in \mathcal{N}, h(p,\cdot,x^{-i})+g_i(\cdot,x^{-i})$ is concave for all $p,x^{-i}$, then there exists an equilibrium for the game $\mathcal{T}_1$. 
\end{proposition}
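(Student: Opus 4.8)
The plan is to observe that, because the constraint set $\mathcal{M}$ of the price-determining player does \emph{not} depend on $x$, an equilibrium of $\mathcal{T}_1$ in the sense of Definition~\ref{def:price_taking} is exactly a Nash equilibrium of a conventional (non-generalized) $N+1$ player game whose joint strategy space is the \emph{fixed} Cartesian product $X \times \mathcal{M} = \prod_{i \in \mathcal{N}} X_i \times \mathcal{M}$. Players $1,\dots,N$ each choose $x_i \in X_i$ to maximize $h(p,x)+g_i(x)$, while the $(N+1)$-th player chooses $p \in \mathcal{M}$ to minimize $f(x,p)$. The two families of inequalities in Definition~\ref{def:price_taking} are precisely the best-response conditions of this game, so it suffices to produce one Nash equilibrium.

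First I would cast every player's problem as a maximization by letting the price-determining player maximize $-f(x,\cdot)$; minimizing a convex $f(x,\cdot)$ is equivalent to maximizing the concave $-f(x,\cdot)$. I would then invoke the classical existence theorem for Nash equilibria in concave games (Debreu--Glicksberg--Fan, and Rosen~\cite{rosen65existence}), verifying its hypotheses one by one: (i) each action set $X_i$ and $\mathcal{M}$ is non-empty, convex and compact, hence so is the product $X \times \mathcal{M}$; (ii) every payoff is jointly continuous, since $h+g_i$ and $f$ are continuous; and (iii) each payoff is concave in its owning player's variable — for player $i$ this is the assumed concavity of $h(p,\cdot,x^{-i})+g_i(\cdot,x^{-i})$, and for the price player it is the concavity of $-f(x,\cdot)$ guaranteed by convexity of $f(x,\cdot)$. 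These are exactly the hypotheses of the theorem, which then yields a point $(x^*,p^*) \in X \times \mathcal{M}$ satisfying all best-response inequalities, i.e.\ an equilibrium of $\mathcal{T}_1$.

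For a self-contained argument the same conclusion follows from Kakutani's fixed-point theorem applied to the product best-response correspondence sending $(x,p)$ to $\prod_{i} \arg\max_{x_i \in X_i}\big[h(p,x)+g_i(x)\big]$ times $\arg\min_{p' \in \mathcal{M}} f(x,p')$: compactness and continuity make each factor non-empty and upper hemicontinuous by Berge's maximum theorem, while the concavity/convexity assumptions make each factor convex-valued, so the map carries the non-empty compact convex set $X \times \mathcal{M}$ into itself with a closed graph. The one point that genuinely requires care — and the only place the structure of $\mathcal{T}_1$ is used — is that the \emph{decoupling} of $\mathcal{M}$ from $x$ keeps the strategy space a fixed product set; without it the best-response map would land in a moving (generalized-game) feasible set and Kakutani would not apply directly, which is exactly why the general case of Section~\ref{sec:price_taking} is harder. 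Passing to $-f$ to absorb the min/max sign asymmetry is the only minor bookkeeping step.
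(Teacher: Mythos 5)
Your proposal is correct and follows essentially the same route as the paper, which simply observes that $\mathcal{T}_1$ is a conventional concave $N+1$ player game over the fixed product set $X \times \mathcal{M}$ and cites the classical existence theorem of \cite{basar99dynamic}. You supply the details (sign flip to $-f$, verification of compactness, convexity, continuity, and the Kakutani/Berge argument) that the paper leaves implicit, but the underlying idea is identical.
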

\begin{proof}
	This proposition can be verified using the results for the existence of an equilibrium in a classical $N$- player game \cite{basar99dynamic}. 
\end{proof}
Thus by checking the conditions in Proposition $\ref{lemma:T_1}$, we can guarantee the existence of an equilibrium for the game $\mathcal{T}_1$. But this result cannot be extended for a general class of games when the constraint is coupled which is described in the next section.

\subsection{Game $\mathcal{T}$: $\mathcal{M}$ dependent on $x$.}
Now we consider the general class of price-coupling games in the price-taking formulation where $\mathcal{M}$, the constraint set of the price-determining player is dependent on $x$, the decision of players. To provide a result for the existence of equilibria in such class of games, we consider a modification of the original game $\mathcal{T}$. The modification is that the constraint of the price-determining player problem is included as a constraint to the players' problem. We denote the game with this modification as ${\mathcal{T}_m}$ and the problem of player $i$ in $\mathcal{T}_m$ as ${\bar{P}}_i^t(p,x^{-i})$ which is given as follows: 
$$
\maxproblem{${\bar{P}}_i^t(p,x^{-i})$}
{x_i}
{h(p,x)+g_i(x)}
{x_i \in {X}_i, p \in \mathcal{M}(x).}
$$
We denote the feasible set of the player $i$'s problem ${\bar{P}}_i^t(p,x^{-i})$ as $\Gamma_i(p,x^{-i})$ which is given as follows:
\begin{equation}
\Gamma_i(p,x^{-i})= \{x_i | x_i \in X_i, p \in \mathcal{M}(x)\}.
\end{equation}
In this section, we show that an equilibrium of the original game $\mathcal{T}$ is also an equilibrium of the modified game $\mathcal{T}_m$. We also show that the reverse relation of equilibria, that is an equilibrium of game $\mathcal{T}_m$ is also an equilibrium of $\mathcal{T}$ under certain conditions.
The price-determining player problem $\mathcal{S}(x)$ is the same as the one defined in Section \ref{sec:introduction}. An equilibrium of the game $\mathcal{T}_m$ is defined as follows.
\begin{definition}
	Consider the game $\mathcal{T}_m$. A point $(\hat{x},\hat{p}) \in \mathcal{A}$ is said to be an equilibrium of $\mathcal{T}_m$ if $\forall i \in \mathcal{N}$,
	$$h(\hat{p},\hat{x})+g_i(\hat{x}) \geq h(\hat{p},x_i,\hat{x}^{-i})+g_i(x_i,\hat{x}^{-i}), \forall x_i \in \Gamma_i(\hat{p},\hat{x}^{-i})$$
	$$f(\hat{x},\hat{p}) \leq f(\hat{x},p), \quad \forall p \in \mathcal{M}(\hat{x}). $$
\end{definition}

A relation between an equilibrium of the original game $\mathcal{T}$ and an equilibrium of the modified game $\mathcal{T}_m$ is given in the following propositions.  
\begin{proposition}\label{lemma:positive}
	Consider the game $\mathcal{T}$. Suppose $(x^*,p^*) \in \mathcal{A}$ is an equilibrium of the game $\mathcal{T}$, then $(x^*,p^*) \in \mathcal{A}$ is also an equilibrium of the game $\mathcal{T}_m$. 
\end{proposition}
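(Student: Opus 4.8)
The plan is to exploit the fact that the games $\mathcal{T}$ and $\mathcal{T}_m$ differ only in the feasible sets of the $N$ players, while sharing the same objective functions $h(p,x)+g_i(x)$ and the same price-determining player problem $\mathcal{S}(x)$. In $\mathcal{T}$ player $i$ optimizes over all of $X_i$, whereas in $\mathcal{T}_m$ player $i$ optimizes over the restricted set $\Gamma_i(p,x^{-i}) = \{x_i \mid x_i \in X_i, p \in \mathcal{M}(x)\}$. The key structural observation is that $\Gamma_i(p,x^{-i}) \subseteq X_i$ for every $i \in \mathcal{N}$, every $p$, and every $x^{-i}$, since membership in $\Gamma_i$ already demands $x_i \in X_i$. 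Hence passing from $\mathcal{T}$ to $\mathcal{T}_m$ only shrinks each player's feasible region, and a point that maximizes a player's payoff over the larger set $X_i$ will, a fortiori, maximize it over any subset.

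Concretely, I would proceed as follows. First, since $(x^*,p^*) \in \mathcal{A}$ is given, the definition of $\mathcal{A}$ yields $x^* \in X$ and $p^* \in \mathcal{M}(x^*)$; in particular $x_i^* \in X_i$ and $p^* \in \mathcal{M}(x_i^*,x^{-i*})$, so $x_i^* \in \Gamma_i(p^*,x^{-i*})$ for every $i \in \mathcal{N}$. This confirms that the candidate point is consistent with each player's feasible set in $\mathcal{T}_m$ and that $(x^*,p^*)$ remains a valid element of $\mathcal{A}$ as required by the definition of an equilibrium of $\mathcal{T}_m$. Next I would transfer the two equilibrium conditions separately. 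The price-determining player's inequality, $f(x^*,p^*) \leq f(x^*,p)$ for all $p \in \mathcal{M}(x^*)$, appears verbatim in the definition of an equilibrium of $\mathcal{T}_m$, so it carries over with no further argument. For each player $i$, the equilibrium of $\mathcal{T}$ supplies $h(p^*,x^*)+g_i(x^*) \geq h(p^*,x_i,x^{-i*})+g_i(x_i,x^{-i*})$ for all $x_i \in X_i$; restricting the quantifier from $X_i$ to the subset $\Gamma_i(p^*,x^{-i*})$ leaves the inequality intact, which is exactly the first condition in the definition of an equilibrium of $\mathcal{T}_m$.

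Combining these observations shows that $(x^*,p^*)$ satisfies both defining inequalities of an equilibrium of $\mathcal{T}_m$, which completes the argument. I do not anticipate any substantive obstacle: the proof is essentially the elementary principle that restricting the domain of a maximization cannot destroy optimality, applied player-by-player. The only delicate point is the use of the hypothesis $(x^*,p^*) \in \mathcal{A}$ (equivalently $p^* \in \mathcal{M}(x^*)$) to guarantee feasibility and the well-posedness of the restricted problem; this is exactly the place where the assumption enters, and it is why the converse direction does not follow for free, presumably motivating the additional hypotheses in the subsequent proposition.
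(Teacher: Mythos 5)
Your proof is correct: the only change from $\mathcal{T}$ to $\mathcal{T}_m$ is the shrinking of each player's feasible set from $X_i$ to $\Gamma_i(p,x^{-i})\subseteq X_i$, so the players' equilibrium inequalities carry over by restricting the quantifier, the price-determining player's condition is literally unchanged, and $(x^*,p^*)\in\mathcal{A}$ gives the needed feasibility $x_i^*\in\Gamma_i(p^*,x^{-i*})$. The paper omits its proof ``owing to space constraints,'' and your argument is exactly the elementary one it evidently has in mind, so there is nothing further to add.
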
 
{We omit the proof owing to space constraints.}
%
The following proposition  relates an equilibrium of the modified game $\mathcal{T}_m$ to that of the original game $\mathcal{T}$.
\begin{proposition} \label{lemma:reverse_relation}
	Consider the game denoted by $\mathcal{T}_m$. Suppose $(\hat{x},\hat{p}) \in \mathcal{A}$ is an equilibrium of the game $\mathcal{T}_m $. Suppose $\forall i \in \mathcal{N}, \Gamma_i(\hat{p},\hat{x}^{-i}) \equiv X_i$, then $(\hat{x},\hat{p}) \in \mathcal{A}$ is also an equilibrium of the game $\mathcal{T}$.  
\end{proposition}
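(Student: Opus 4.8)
The plan is to verify directly that the two defining inequalities of an equilibrium of $\mathcal{T}$ hold at $(\hat{x},\hat{p})$, exploiting that they are already known to hold for the modified game $\mathcal{T}_m$ and that the only difference between the two games lies in the feasible set appearing in each player's problem. Since $(\hat{x},\hat{p}) \in \mathcal{A}$ by hypothesis, the membership requirement in the definition of an equilibrium of $\mathcal{T}$ is immediately satisfied, so it remains only to check the two inequality conditions.

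First I would dispose of the price-determining player's condition. Both the equilibrium of $\mathcal{T}_m$ and the equilibrium of $\mathcal{T}$ require $f(\hat{x},\hat{p}) \leq f(\hat{x},p)$ for all $p \in \mathcal{M}(\hat{x})$; this condition is literally identical in the two definitions, because the price-determining player's problem $\mathcal{S}(x)$ is left unchanged in passing from $\mathcal{T}$ to $\mathcal{T}_m$. Hence this inequality carries over with nothing further to prove. Next I would treat the players' inequalities: from the assumed equilibrium of $\mathcal{T}_m$ we have, for each $i \in \mathcal{N}$,
$$h(\hat{p},\hat{x})+g_i(\hat{x}) \geq h(\hat{p},x_i,\hat{x}^{-i})+g_i(x_i,\hat{x}^{-i}), \quad \forall x_i \in \Gamma_i(\hat{p},\hat{x}^{-i}).$$
Invoking the hypothesis $\Gamma_i(\hat{p},\hat{x}^{-i}) \equiv X_i$, the range of the quantifier becomes $x_i \in X_i$, which is precisely the player inequality required for an equilibrium of $\mathcal{T}$. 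Since the objective $h+g_i$ is the same function in $P^t_i$ and $\bar{P}^t_i$, no additional manipulation is needed.

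Because both inequalities and the membership in $\mathcal{A}$ then hold for every $i \in \mathcal{N}$, I conclude that $(\hat{x},\hat{p})$ is an equilibrium of $\mathcal{T}$. I do not expect any substantive obstacle: the entire content of the statement is the hypothesis $\Gamma_i(\hat{p},\hat{x}^{-i}) \equiv X_i$, which collapses the enlarged feasible set of $\mathcal{T}_m$ back onto the feasible set of $\mathcal{T}$ at the candidate point, so the proof is a direct substitution. The genuinely delicate direction is the converse (Proposition \ref{lemma:positive}), where one must argue that an equilibrium of $\mathcal{T}$ remains optimal after the coupling constraint $p \in \mathcal{M}(x)$ is appended to each player's problem; the present statement is the easy half of the equivalence and follows by inspection.
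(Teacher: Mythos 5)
Your proof is correct and rests on exactly the same observation as the paper's: the hypothesis $\Gamma_i(\hat{p},\hat{x}^{-i})\equiv X_i$ makes the player inequalities of $\mathcal{T}_m$ and $\mathcal{T}$ coincide at $(\hat{x},\hat{p})$, while the price-determining player's condition is literally unchanged. The paper merely phrases this as a contradiction argument (a profitable deviation in $\mathcal{T}$ would be a profitable deviation in $\mathcal{T}_m$), whereas you argue directly; the content is identical.
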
 
\begin{proof}
	Suppose $(\hat{x},\hat{p}) \in \mathcal{A}$ is an equilibrium of the game $\mathcal{T}_m$. Since $\forall i \in \mathcal{N}, \Gamma_i(\hat{p},\hat{x}^{-i}) \equiv X_i$, $(\hat{x},\hat{p}) \in \mathcal{A}$ is also feasible as an equilibrium of the game $\mathcal{T}$. Suppose $(\hat{x},\hat{p}) \in \mathcal{A}$ is \textit{not} an equilibrium of the  game $\mathcal{T}$. Then either,
	for some $i \in \mathcal{N}$, $\exists$ an $\bar{x}_i \in X_i$ such that,
	$${h(\hat{p},\bar{x}_i,\hat{x}^{-i})+g_i(\bar{x}_i,\hat{x}^{-i})} >  {h(\hat{p},\hat{x})+g_i(\hat{x})},$$ or
	$\exists$ a $\bar{p} \in \mathcal{M}(\hat{x})$ such that,
	$f(\hat{x},\bar{p}) < f(\hat{x},\hat{p}).$
	Since $\forall i \in \mathcal{N}, \Gamma_i(\hat{p},\hat{x}^{-i}) \equiv X_i$, $\bar{x}_i \in X_i \implies \bar{x}_i \in \Gamma_i (\hat{p},\hat{x}^{-i})$. In either of these cases, $(\hat{x},\hat{p}) \in \mathcal{A}$ cannot be an equilibrium of the  game $\mathcal{T}_m$ which is a contradiction to the assumption. Thus an equilibrium of the game $\mathcal{T}$ is also an equilibrium of the game $\mathcal{T}_m$ if $\Gamma_i(\hat{p}, \hat{x}^{-i})=X_i, \forall i \in \mathcal{N}$.
\end{proof}
Thus, an equilibrium of the game $\mathcal{T}_m$ is an equilibrium of the game $\mathcal{T}$ if $\forall i \in \mathcal{N}, \Gamma_i(\hat{p},\hat{x}^{-i}) \equiv X_i$, where $\hat{p}$ and $\hat{x}^{-i}$ are the equilibrium price and rivals action at the equilibrium of $\mathcal{T}_m$. 
In relation to an electricity market, the condition  $X_i = \Gamma_i({p}, {x}^{-i})$ says that for player $i \in \mathcal{N}$, given $x^{-i}$, the price $p$ that the ISO determines is valid for all the bids of player $i$.
Now we use the relations between the games $\mathcal{T}$ and $\mathcal{T}_m$ to provide a condition for the existence of equilibria for a game $\mathcal{T}$ having a potential function.  To make the notations simple, we denote the objective function of player $i$ as $J_i(x,p) \triangleq {h(p,x)+g_i(x)}$.  	
For a game $\mathcal{T}_m$ which has a potential function $\Pi$ \cite{monderer96potential}, consider the following optimization problem denoted by $\mathbb{P}^t$. 
$$
\maxproblem{$\mathbb{P}^t$}
{(x,p)\in \mathcal{A}}
{\Pi(x,p).}
{(x,p) \in \mathcal{A}}
$$
Lemma $\ref{lemma:price_taking_result}$ gives a relation of the maximizer of the problem $\mathbb{P}^t$ and an equilibrium of the  game $\mathcal{T}_m$.
\begin{lemma}{\label{lemma:price_taking_result}}
	Consider the game $\mathcal{T}_m$. Suppose $\Pi$ is a potential function for this game. If $(x^*,p^*) \in \mathcal{A}$ is a maximizer of problem $\mathbb{P}^t$, then $(x^*,p^*) \in \mathcal{A}$ is an equilibrium of this game.
\end{lemma}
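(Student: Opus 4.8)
The plan is to follow the same template as Theorems~\ref{th:price_anticipative1} and~\ref{theorem:potential}: begin from the global optimality of $(x^*,p^*)$ for $\mathbb{P}^t$ over $\mathcal{A}$, restrict the comparison points to unilateral deviations of each of the $N+1$ players, and then convert the resulting scalar inequalities on $\Pi$ into the two families of equilibrium inequalities defining an equilibrium of $\mathcal{T}_m$ via the potential property.

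First I would record that a maximizer of $\mathbb{P}^t$ satisfies $\Pi(x^*,p^*) \ge \Pi(x,p)$ for every $(x,p) \in \mathcal{A}$. The crucial observation is that $\mathcal{A}$ acts as a shared constraint for $\mathcal{T}_m$: fixing $x^{-i}=x^{-i*}$ and $p=p^*$ and letting $x_i$ vary, the admissible slice of $\mathcal{A}$ is exactly $\{x_i : (x_i,x^{-i*},p^*) \in \mathcal{A}\} = \Gamma_i(p^*,x^{-i*})$, since $\mathcal{A}=\{(x,p)\mid x \in X,\ p \in \mathcal{M}(x)\}$ and $\Gamma_i(p,x^{-i})=\{x_i\mid x_i \in X_i,\ p \in \mathcal{M}(x)\}$. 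Hence the global inequality specializes to $\Pi(x^*,p^*) \ge \Pi(x_i,x^{-i*},p^*)$ for all $x_i \in \Gamma_i(p^*,x^{-i*})$. Applying the potential property for player $i$---a unilateral change in $x_i$ moves $J_i$ and $\Pi$ by the same amount---turns this into $J_i(x^*,p^*) \ge J_i(x_i,x^{-i*},p^*)$, which is precisely the first family of equilibrium inequalities, for every $i \in \mathcal{N}$.

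For the price-determining player I would instead fix $x=x^*$ and vary $p$; the corresponding slice of $\mathcal{A}$ is $\{p : (x^*,p) \in \mathcal{A}\} = \mathcal{M}(x^*)$, so optimality yields $\Pi(x^*,p^*) \ge \Pi(x^*,p)$ for all $p \in \mathcal{M}(x^*)$. Here the potential property must be invoked with the opposite sign, reflecting that this player minimizes $f$ while the others maximize their payoffs: a unilateral change in $p$ decreases $f$ by the same amount that it increases $\Pi$, so $\Pi(x^*,p^*) \ge \Pi(x^*,p)$ gives $f(x^*,p^*) \le f(x^*,p)$ for all $p \in \mathcal{M}(x^*)$. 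This is the second equilibrium inequality, and together with the player inequalities it shows that $(x^*,p^*)$ is an equilibrium of $\mathcal{T}_m$.

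The main obstacle, and the only substantive point, is the correct bookkeeping of the potential convention for the mixed maximize/minimize game $\mathcal{T}_m$: the $N$ players maximize their objectives while the price-determining player minimizes $f$, so a single scalar potential $\Pi$ can encode all $N+1$ best-response conditions only if it tracks $+J_i$ for the ordinary players and $-f$ for the price player. This convention (following~\cite{monderer96potential}) must be stated explicitly so that maximizing $\Pi$ simultaneously recovers the maximizing and the minimizing equilibrium conditions; the remainder is the routine slicing of the shared constraint $\mathcal{A}$, analogous to the role of Lemma~\ref{lemma:1} in Theorem~\ref{th:price_anticipative1}.
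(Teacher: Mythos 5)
Your proposal is correct and follows essentially the same route as the paper's proof: restrict the global optimality inequality $\Pi(x^*,p^*)\ge\Pi(x,p)$ over $\mathcal{A}$ first to the slice $x=x^*$, $p\in\mathcal{M}(x^*)$ to recover the price-determining player's condition, and then to the slice $x^{-i}=x^{-i*}$, $p=p^*$, $x_i\in\Gamma_i(p^*,x^{-i*})$ to recover each player's condition, converting both via the potential property. Your explicit remark that $\Pi$ must track $+J_i$ for the maximizing players and $-f$ for the minimizing price player is a sign convention the paper uses implicitly but does not spell out; otherwise the arguments coincide.
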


\begin{proof}
	Suppose $(x^*,p^*) \in \mathcal{A}$ is a maximizer of the problem $\mathbb{P}^t$. Then,
	\begin{align}{\label{eq:inequality}}
	\Pi(x^*,p^*) \geq \Pi(x,p), \quad \forall (x,p) \in \mathcal{A}.
	\end{align}
	The inequality (\ref{eq:inequality}) still holds even if we replace $x$ by $x^*$ in the above condition. Hence,
	$\Pi(x^*,p^*) \geq \Pi(x^*,p), \forall (x^*,p) \in \mathcal{A}.$
	Since $(x^*,p) \in \mathcal{A} \implies p \in \mathcal{M}(x^*)$, the inequality can be rewritten as,
	$\Pi(x^*,p^*) \geq \Pi(x^*,p), \quad \forall p \in \mathcal{M}(x^*).$
	Since $\Pi$ is a potential function of the game $\mathcal{T}_m$,
	\begin{equation} \label{eq:equilibrium1}
	f(x^*,p^*) \leq f(x^*,p), \quad \forall p \in \mathcal{M}(x^*).
	\end{equation}
	Similarly, the inequality (\ref{eq:inequality}) holds even if we replace $p$ by $p^*$ and $x^{-i}$ by $x^{-i*}$. That is,
	$$\Pi(x^*,p^*) \geq \Pi(x_i,x^{-i*},p^*), \quad \forall (x_i,x^{-i*},p^*) \in \mathcal{A}.$$
	Since	$(x_i,x^{-i*},p^*) \in \mathcal{A} \implies x_i \in X_i, p^* \in \mathcal{M}(x_i,x^{-i*})$, the above condition can be rewritten as,
	$\Pi(x^*,p^*) \geq \Pi(x_i,x^{-i*},p^*), \quad \forall x_i \in \Gamma_i(p^*,x^{-i*}).$
	Since $\Pi$ is a potential function of the game $\mathcal{T}_m$, $\forall i \in \mathcal{N}$,
	\begin{align}\label{eq:equilibrium2}
	h(x^*,p^*)+g_i(x^*) \geq h(x_i,x^{-i*},p^*)+g_i(x_i,x^{-i*}), \quad \forall x_i \in \Gamma_i(p^*,x^{-i*}).
	\end{align}
	Thus, from the conditions $\eqref{eq:equilibrium1}$ and $\eqref{eq:equilibrium2}$ it can be seen that $(x^*,p^*) \in \mathcal{A}$ is an equilibrium of the game $\mathcal{T}_m$. 
\end{proof}
Thus a maximizer of the optimization problem $\mathbb{P}^t$ is an equilibrium for the game $\mathcal{T}_m$. Now we use this optimization problem to provide conditions for the existence of an equilibrium in such games.

\begin{corollary} 
	Consider the game $\mathcal{T}_m$. Suppose this game admits a potential function $\Pi$. Then there exists an equilibrium if the function ${\Pi}$ is continuous and the set $\mathcal{A}$ is non empty and compact. The set  $\mathcal{A}$ is compact if the {set $X$ is compact and $\mathcal{M}$ is closed and uniformly bounded.}
\end{corollary}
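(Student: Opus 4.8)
The plan is to mirror the structure of the proof of Corollary~\ref{Coro:1}, treating the two assertions of the statement separately. For the existence of an equilibrium I would invoke Lemma~\ref{lemma:price_taking_result} together with the Weierstrass extreme value theorem; for the compactness of $\mathcal{A}$ I would argue boundedness and closedness directly from the hypotheses on $X$ and $\mathcal{M}$.

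First, consider the existence claim. By Lemma~\ref{lemma:price_taking_result}, every maximizer of the problem $\mathbb{P}^t$ over $\mathcal{A}$ is an equilibrium of the game $\mathcal{T}_m$, so it suffices to exhibit a maximizer of $\mathbb{P}^t$. Since $\Pi$ is continuous and $\mathcal{A}$ is nonempty and compact, the Weierstrass extreme value theorem guarantees that $\Pi$ attains its supremum on $\mathcal{A}$, and any such maximizer is the desired equilibrium.

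Second, consider the compactness claim. Observe that $\mathcal{A} = \{(x,p) : x \in X, p \in \mathcal{M}(x)\}$ is precisely the graph of the set-valued map $\mathcal{M}$ restricted to $X$. I would establish boundedness and closedness separately. Boundedness is immediate: $X$ is compact, hence bounded, and the uniform boundedness of $\mathcal{M}$ supplies a bounded set $B$ with $\mathcal{M}(x) \subseteq B$ for every $x$, so that $\mathcal{A} \subseteq X \times B$ is bounded. For closedness, take any sequence $(x_n,p_n) \in \mathcal{A}$ with $(x_n,p_n) \to (x,p)$; since $X$ is closed we have $x \in X$, and since $\mathcal{M}$ is closed with $p_n \in \mathcal{M}(x_n)$ and $(x_n,p_n)\to(x,p)$, the closed-graph property yields $p \in \mathcal{M}(x)$, whence $(x,p) \in \mathcal{A}$. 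Being a closed and bounded subset of a Euclidean space, $\mathcal{A}$ is compact.

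There is no serious obstacle here; the argument is routine once the existence result of Lemma~\ref{lemma:price_taking_result} is in hand. The only point requiring care is the interpretation of the hypotheses on the set-valued map $\mathcal{M}$: that ``$\mathcal{M}$ is closed'' should be read as closedness of its graph (so that limits of feasible pairs remain feasible), and that ``uniformly bounded'' means a single bounded set contains every image $\mathcal{M}(x)$. With these readings, the closedness and boundedness of $\mathcal{A}$ follow as above, and the corollary is complete.
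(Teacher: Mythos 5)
Your proposal is correct and follows essentially the same route the paper takes: the paper leaves this corollary without an explicit proof, but its outline for the analogous Corollary~\ref{Coro:1} is exactly your argument --- existence via the maximizer-to-equilibrium lemma (here Lemma~\ref{lemma:price_taking_result}) plus Weierstrass, and compactness of the feasible set from uniform boundedness of $\mathcal{M}$ together with compactness of $X$ and a closed-graph argument. Your explicit remark that ``$\mathcal{M}$ is closed'' must be read as closedness of the graph is the right reading and, if anything, makes the step more careful than the paper's own sketch.
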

Now by using the relation of equilibria between the games $\mathcal{T}$ and $\mathcal{T}_m$, we give conditions for the existence of an equilibrium in the original game $\mathcal{T}$.
\begin{corollary}
	Consider the game $\mathcal{T}$ which has potential function $\Pi$. Consider the optimization problem $\mathbb{P}^t$ for such a game. Suppose $(x^*,p^*) \in \mathcal{A}$ is a maximizer of the problem $\mathbb{P}^t$ and suppose $ \forall i \in \mathcal{N}, \Gamma_i({p}^*,{x}^{-i*})= X_i.$ Then $(x^*,p^*) \in \mathcal{A}$ is an equilibrium of the game $\mathcal{T}$.
\end{corollary}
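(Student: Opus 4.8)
The plan is to obtain the conclusion by composing two results already established for the price-taking formulation: Lemma~\ref{lemma:price_taking_result}, which turns a maximizer of $\mathbb{P}^t$ into an equilibrium of the modified game $\mathcal{T}_m$, and Proposition~\ref{lemma:reverse_relation}, which transfers an equilibrium of $\mathcal{T}_m$ back to the original game $\mathcal{T}$ whenever the added constraint is inactive in the sense that $\Gamma_i(p^*,x^{-i*})\equiv X_i$.

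First I would observe that the hypothesis of Lemma~\ref{lemma:price_taking_result} is met. The corollary assumes that $\Pi$ is a potential function for $\mathcal{T}$, whereas the lemma requires a potential function for $\mathcal{T}_m$. These two games share exactly the same objective functions---the players' payoffs $J_i(x,p)=h(p,x)+g_i(x)$ and the price-determining player's cost $f(x,p)$---and differ only in their feasible sets, since $\mathcal{T}_m$ merely appends the constraint $p\in\mathcal{M}(x)$ to each player's problem. Because the defining identity of a potential function involves only \emph{differences} of objective values under unilateral deviations, and these differences are unaffected by restricting the feasible set, $\Pi$ is automatically a potential function for $\mathcal{T}_m$ as well. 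Hence Lemma~\ref{lemma:price_taking_result} applies with this same $\Pi$.

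With that in hand the argument is a direct chain. Since $(x^*,p^*)\in\mathcal{A}$ is a maximizer of $\mathbb{P}^t$ and $\Pi$ is a potential function for $\mathcal{T}_m$, Lemma~\ref{lemma:price_taking_result} gives that $(x^*,p^*)$ is an equilibrium of $\mathcal{T}_m$. Next I would invoke Proposition~\ref{lemma:reverse_relation} with $(\hat{x},\hat{p})=(x^*,p^*)$: the standing hypothesis $\Gamma_i(p^*,x^{-i*})= X_i$ for all $i\in\mathcal{N}$ is precisely the condition under which that proposition upgrades an equilibrium of $\mathcal{T}_m$ to an equilibrium of $\mathcal{T}$. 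Therefore $(x^*,p^*)\in\mathcal{A}$ is an equilibrium of the game $\mathcal{T}$, as claimed.

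The only point that needs a moment's care, rather than a genuine obstacle, is the transfer of the potential function from $\mathcal{T}$ to $\mathcal{T}_m$ noted above; everything else is a mechanical composition of Lemma~\ref{lemma:price_taking_result} and Proposition~\ref{lemma:reverse_relation}, so no new estimates or constructions are required.
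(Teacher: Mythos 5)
Your proof is correct and follows exactly the same route as the paper: apply Lemma~\ref{lemma:price_taking_result} to get an equilibrium of $\mathcal{T}_m$ from the maximizer of $\mathbb{P}^t$, then use Proposition~\ref{lemma:reverse_relation} together with the hypothesis $\Gamma_i(p^*,x^{-i*})=X_i$ to transfer it to $\mathcal{T}$. Your extra remark justifying that a potential function for $\mathcal{T}$ is also one for $\mathcal{T}_m$ (since the two games differ only in feasible sets, not objectives) is a small point the paper leaves implicit, and it is a welcome addition rather than a deviation.
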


\begin{proof}
	The maximizer of the problem $\mathbb{P}^t$ is an equilibrium of the game $\mathcal{T}_m$ which is given by $(x^*,p^*) \in \mathcal{A}$. Since $\forall i \in \mathcal{N}, \Gamma_i({p}^*,{x}^{-i*}) = X_i$, by Proposition $\ref{lemma:reverse_relation}$, this equilibrium is also an equilibrium of the game $\mathcal{T}$.
\end{proof}
Thus with these conditions the existence of equilibria can be guaranteed for a price-coupling game in the price-taking formulation. In the next section, we use the results from price-coupling games to analyze the existence of equilibria in generation expansion planning problem.

\section{Generation Expansion Planning Problem in Electricity Markets}\label{sec:GEP}
We now apply the above results to the generation expansion planning problem in electricity markets. 
An efficiently functioning electricity market always makes sure that there is enough generation capacity available to meet the future projected demand. To meet the future demand, new generating units need to be added to the existing power system. Generation expansion planning (GEP) problem addresses the problem of finding the plant, the technology, the plant capacity, the location of the plant and  other decisions related to generation expansion that ensures that the capacity meets the demand in an economic fashion.

In the traditional centralized setting, the central power system planner performs a cost-minimization optimization to find these parameters related to generation expansion.  In a deregulated market, multiple generation companies make independent investment plans in order to to maximize their profit. This leads to strategic interaction and gaming among the companies involved in generation expansion. Hence the decision of a company affects other companies profits and decisions. In this section, we apply the results developed in the previous section to the GEP problem discussed in \cite{chuang2001game}. We consider a dynamic version of the problem over a discrete time for horizon $T$. In particular, we focus on open loop equilibria in such games (see e.g.,~\cite{abraham2017newresults}). In a GEP problem, the decision variables of generation company $i$ are the following:
\begin{itemize}
	\item $x_{ci}$, the plant capacity in MW, \item $x_{oti}$, the operating reserve at time instant $t$ in MW, 
	\item $e_{ti}$,  the quantity of power in MW to be used in the energy market at time instant $t$,
	\item $r_{ti}$, the quantity of power in MW to be used in the real-time market at time $t$.
\end{itemize} 
Using the decision variables submitted by the generation companies, the ISO declares the following prices: 

\begin{itemize}
	\item $P_t$, the price of $X_{ot}$ reserve, where $X_{ot} = \sum_{i=1}^{N} x_{oti}$, \item $MCP_t$, the fuel cost of marginal unit in energy market at time $t$, 
	\item $RTP_t$, the fuel cost of marginal unit in real time market at time $t$. 
\end{itemize}
The other notation used in this problem formulation is given as follows:
\begin{itemize}
	\item $T$, the time duration of planning period,
	\item $F_{i}$, the production cost of unit $i$,
	\item $C_i$, the annualized capital investment cost of unit $i$,
	\item $\rho_i$, the forced outage rate of unit $i$, 
	\item $\rho$, the average forced outage rate, 
	\item $L_t$, the load at time $t$, 
	\item $a$, the system wide average customer outage cost, 
	\item $ELDC$, the equivalent load duration curve of the system, 
	\item $C_0$, the total capacity of the existing system, 
	\item $R^u_i$ and $R^d_i$, the ramp-up and ramp-down limits of generating unit $i$.
\end{itemize}
Given other players decisions denoted by $x^{-i}_{c}, x^{-i}_{ot}, e^{-i}_{t}, r^{-i}_{t}, t=1:T$, player $i$'s problem is given as follows \cite{chuang2001game}:
$$
\maxproblem{}
{x_{ci}, x_{oti}, e_{ti}, r_{ti}, t=1:T}
{ \begin{aligned} \sum\limits_{t=1}^T [ MCP_t. e_{ti} +  RTP_t. r_{ti} + P_t. x_{oti} ]  - [ C_i(x_{ci}) + \sum\limits_{t=1}^T   F_{i}(e_{ti}+r_{ti} )]  \end{aligned} }
{{\begin{array}{l@{\ }c@{\ }l}
		0 \leq x_{oti} \leq x_{ci}, \forall t, \\
		e_{ti} \leq x_{ci} - x_{oti}, \forall t,  \\
		r_{ti} \leq x_{ci} - e_{ti}, \forall t, \\
		R^d_i \leq ( e_{(t+1)i} + r_{(t+1)i} ) - ( e_{ti} + r_{ti} ) \leq R^u_i, \forall t\\
	\begin{aligned}	(P_t, MCP_t,  RTP_t, t=1:T) \in  {\rm{ISO}}(x_{ci}, x_{oti}, e_{ti}, r_{ti}, t=1:T, \forall i) \end{aligned}
		\end{array}}}
$$
In the above problem, a generation company maximizes its profit, which is the difference of the revenue  $\sum\limits_{t=1}^T [ MCP_t. e_{ti} + RTP_t. r_{ti} + P_t. x_{oti} ]$, and the cost involved ($C_i(x_{ci}) + \sum\limits_{t=1}^T   F_{i}(e_{ti}+r_{ti})$). The constraints in the above optimization problem denote the operational limitations. The first constraint makes sure that the operating reserve at a time instant do not exceed the plant capacity. The second constraint guarantees that the quantity after committing for the operation reserve is available for the energy market. The third constraint gives the quantity available for usage in the real-time market. The fourth constraint  guarantees that the ramp-up and ramp-down constraints of a generating unit are not violated.

The ISO determines the operating reserve price $P_t$, the energy price $MCP_t$ and the real-time price $RTP_t$ at time $t$ in separate markets which are given as follows:
\begin{enumerate}
	\item Operating reserve price, $P_t$:
	The ISO sums up the operating reserve submitted by all the players at time $t$ given by $X_{ot} = \sum_{i=1}^{N} x_{oti}, \forall t$. The price for the reserve is calculated using the following equation, 
	$P_t = T (1- \rho ) ELDC(C_0+X_{ot}) a, \forall t$ (see \cite{chuang2001game} for more details).
	
	\item Energy price, $MCP_t$:
	The energy price is calculated in the energy market (e.g. Day-ahead Market).  Marginal unit is the unit at which the load demand is met, which is given by the following condition:
	$$\sum\limits_{i=1}^N e_{ti} = L_t, \forall t.$$
	Energy price is calculated as the fuel cost of the marginal unit in the energy market at time $t$.
	
	\item Real-time price, $RTP_t$:
	The real-time price is calculated in real-time market. This price is given by the fuel cost of the marginal unit in the real-time market at time $t$ and the marginal unit is that unit which satisfies the following condition in real-time market.
	$$\sum\limits_{i=1}^N r_{ti} 
	= \sum\limits_{i=1}^N \rho_i.e_{ti}, \forall t$$
\end{enumerate}

Two additions from the GEP model presented in \cite{chuang2001game} are the following. The ramp rate limit constraints are not considered in the model of GEP problem in \cite{chuang2001game}. This makes the problem dynamic in nature and hence more complicated to solve. Also, we consider the problem of ISO as a follower-level problem, while in \cite{chuang2001game}, it is considered in a generation company's problem itself. It can be observed that the GEP problem that we present fits into a form of the price-coupling game. Table \ref{Table1} gives a summary of the comparison of decision variables, the objective function and the feasible region of the price-coupling game and the GEP problem. We use the notation $MUP()$ to denote the marginal unit price at which the condition in the brackets is satisfied, that is, $MCP_t = MUP(\sum\limits_{i=1}^N e_{ti} = L_t),$ and $RTP_t = MUP(\sum\limits_{i=1}^N r_{ti} 
= \sum\limits_{i=1}^N \rho_i.e_{ti}).$ It can be noted that in the GEP problem, the price is a vector. In the next section, we analyze the existence of equilibrium in the price-taking formulation, and later, the price-anticipative formulation of the GEP problem.
\begin{table}
	\normalsize
	\setlength\tabcolsep{0.1cm}
	\def\arraystretch{2.0}
	\begin{center}
		\begin{tabular}{|p{1.5cm} ||p{3.2cm} | p{7cm} |} 
			\hline
			& Price-Coupling Game & Generation Expansion Planning Problem \\  
			\hline\hline
			Decision Variables &
			${{\begin{array}{l@{\ }c@{\ }l}
					1. \quad x_i \\
					2. \quad p_i	\end{array}}}$
			&
			${{\begin{array}{l@{\ }c@{\ }l}
					1. \quad (x_{ci}, x_{oti},e_{ti},r_{ti}, t=1:T)	\\
					2.\quad (P_{ti}, MCP_{ti}, RTP_{ti}, t=1:T)	\end{array}}}$				  \\ 
			\hline
			Objective Function Terms &
			${{\begin{array}{l@{\ }c@{\ }l}
					h(p_i,x_i)+g_i(x_i)\\
					1. \quad h(p_i,x_i) = p_i.x_i\\
					2. \quad g_i(x_i)
					\end{array}}}$
			&
			${{\begin{array}{l@{\ }c@{\ }l}
					1. \quad \sum\limits_{t=1}^T [ MCP_{ti}. e_{ti} + RTP_{ti}. r_{ti} +  P_{ti}. x_{oti}] \\
					2. \quad - [ C_i(x_{ci}) + \sum\limits_{t=1}^T   F_{i}(e_{ti}+r_{ti} )]  \end{array}}}$				  \\ 
			\hline			
			Feasible Region &
			${{\begin{array}{l@{\ }c@{\ }l}
					1. \quad x_i \in X_i \\
					2. \quad p_i \in SOL[\mathcal{S}(x)]
					\end{array}}}$
			&
			${{\begin{array}{l@{\ }c@{\ }l}
					1. \quad 0 \leq x_{oti} \leq x_{ci}, \forall t, \\
					e_{ti} \leq x_{ci} - x_{oti}, \forall t,  \\
					r_{ti} \leq x_{ci} - e_{ti}, \forall t, \\
					R^d_i \leq (e_{(t+1)i} + r_{(t+1)i} ) - ( e_{ti} + r_{ti} ) \leq R^u_i, \forall t \\
					2. \quad \text{(i) Operating Reserve Price} P_t\\
					
					X_{ot} = \sum_{i=1}^{N} x_{oti}, \forall t,\\
					P_t = T (1- \rho) ELDC(C_0+X_{ot}) a, \forall t. \\
					\text{(ii) Energy Market Price} MCP_t \\
					MCP_t = MUP(\sum\limits_{i=1}^N e_{ti} = L_t), \forall t \\\text{(The fuel cost of the marginal unit)} \\
					\text{(iii) Real-time Price} RTP_t \\
					RTP_t = MUP(\sum\limits_{i=1}^N r_{ti} 
					= \sum\limits_{i=1}^N \rho_i.e_{ti}), \forall t \\\text{(The fuel cost of the marginal unit)}
					\end{array}}}$				  \\ 
			\hline
		\end{tabular}
		\vspace{0.2cm}
		\caption{Comparison of the Price-Coupling Game and the GEP problem}
		\label{Table1}
	\end{center}
\end{table}

\subsection{Price-taking formulation of the GEP Problem}
The price-taking formulation is modelled as an $N+1$ player game, with ISO as the additional player in the game.  The generation companies in GEP problem take their decision considering prices as a given parameter. Hence, given $(P_t, MCP_t, RTP_t, t=1,\hdots,T) $ and the other players decisions $(x^{-i}_{c}, x^{-i}_{ot}, e^{-i}_{t}, r^{-i}_{t}, t=1,\hdots,T)$, a generation company $i$'s problem in the price-taking formulation is given as follows:
$$
\maxproblem{}
{x_{ci}, x_{oi}, e_{ti}, r_{ti}, t=1:T}
{
	\begin{aligned}	
	\sum\limits_{t=1}^T [ MCP_t. e_{ti} + RTP_t. r_{ti} + P_t. x_{oti} ]  -[ C_i(x_{ci}) + \sum\limits_{t=1}^T   F_{i}(e_{ti}+r_{ti} )]   
	\end{aligned}	
}
{{\begin{array}{l@{\ }c@{\ }l}
		0 \leq x_{oti} \leq x_{ci}, \forall t, \\
		e_{ti} \leq x_{ci} - x_{oti}, \forall t,  \\
		r_{ti} \leq x_{ci} - e_{ti}, \forall t, \\
		R^d_i \leq ( e_{(t+1)i} + r_{(t+1)i} ) - ( e_{ti} + r_{ti} ) \leq R^u_i, \forall t
		\end{array}}}
$$
The additional  player, ISO determines the different prices from the price curves using the decisions of the generation companies as follows:
(i) Operating reserve price $P_t = T (1- \rho) ELDC(C_0+X_{ot}) a, \forall t, X_{ot} = \sum_{i=1}^{N} x_{oti}, \forall t$, 
{(ii) Energy market price,}
$MCP_t = MUP(\sum\limits_{i=1}^N e_{ti} = L_t), \forall t$ and 
{(iii) Real time price,}
$RTP_t = MUP(\sum\limits_{i=1}^N r_{ti} 
= \sum\limits_{i=1}^N \rho_i.e_{ti}), \forall t.$ The following corollary provide conditions for the existence of an equilibrium for GEP problem in the price-taking formulation. We observe that this game is of type $\mathcal{T}_1$, using which we give our existence result.
\begin{corollary}
	Consider the generation expansion planning (GEP) game in the price-taking formulation with a generation company $i$'s problem given as above. If the cost functions $C_i, F_{i}, i=1:N$ are convex, then the game admits an equilibrium.
\end{corollary}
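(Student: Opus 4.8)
The plan is to recognize the price-taking GEP game as an instance of the game $\mathcal{T}_1$ and then invoke Proposition \ref{lemma:T_1}. First I would set up the dictionary between the two formulations. For generation company $i$ the decision vector is $x_i = (x_{ci},(x_{oti},e_{ti},r_{ti})_{t=1}^T)$, while the price-determining player (the ISO) holds the price vector $p = (P_t,MCP_t,RTP_t)_{t=1}^T$. The objective splits exactly as in the template: the price-coupled term is $h(p,x) = \sum_{t=1}^T [MCP_t e_{ti} + RTP_t r_{ti} + P_t x_{oti}]$, which has the same form for every company, and the price-independent term is $g_i(x) = -[C_i(x_{ci}) + \sum_{t=1}^T F_i(e_{ti}+r_{ti})]$. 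The observation that places the game in the class $\mathcal{T}_1$ rather than the general $\mathcal{T}$ is that, in the price-taking formulation, the operational constraints defining each company's feasible set $X_i$ do not involve the prices, and the ISO simply reads the prices off the price curves without any restriction imposed by the companies' decisions; hence the price-determining player's constraint set is independent of $x$.

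Next I would verify the hypotheses of Proposition \ref{lemma:T_1}. The set $X_i$ is cut out by the linear inequalities $0 \le x_{oti} \le x_{ci}$, $e_{ti} \le x_{ci}-x_{oti}$, $r_{ti} \le x_{ci}-e_{ti}$ and the ramp bounds $R^d_i \le (e_{(t+1)i}+r_{(t+1)i})-(e_{ti}+r_{ti}) \le R^u_i$, so it is a convex polyhedron; together with the (physically natural) upper bound on installed capacity and the implicit non-negativity of the dispatch variables, $X_i$ is non-empty, convex and compact. The ISO's feasible set $\mathcal{M}$ is taken to be a fixed compact convex box containing the range of the price curves, independent of $x$, equipped with a convex cost $f(x,\cdot)$ whose minimizer recovers the price-curve values.

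The heart of the verification is the concavity requirement, and this is the only place the hypothesis enters. For fixed $p$ and $x^{-i}$ the map $x_i \mapsto h(p,x)$ is linear, hence concave; and $x_i \mapsto g_i(x) = -[C_i(x_{ci}) + \sum_t F_i(e_{ti}+r_{ti})]$ is concave precisely because $C_i$ and $F_i$ are convex, since $F_i$ composed with the affine map $(e_{ti},r_{ti}) \mapsto e_{ti}+r_{ti}$ remains convex, the sum over $t$ of convex functions is convex, and its negation is concave. Thus $h(p,\cdot,x^{-i}) + g_i(\cdot,x^{-i})$ is concave for every $p$ and $x^{-i}$, which is exactly the concavity condition demanded by Proposition \ref{lemma:T_1}. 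With $f$ continuous and convex in $p$ and $h+g_i$ continuous, all conditions of the proposition hold and an equilibrium of the $N+1$ player game exists.

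The main obstacle I anticipate is not the concavity bookkeeping but justifying rigorously that the game really is of type $\mathcal{T}_1$ and that the payoffs are continuous. The prices $MCP_t$ and $RTP_t$ are defined implicitly as marginal-unit fuel costs through the clearing conditions $\sum_i e_{ti}=L_t$ and $\sum_i r_{ti}=\sum_i \rho_i e_{ti}$, while $P_t$ involves the equivalent load duration curve $ELDC$; one must check that these yield a continuous ISO problem, convex in $p$, on an $x$-independent constraint set, and that $X_i$ is genuinely compact (which requires the capacity bound). Once the $\mathcal{T}_1$ structure and continuity are secured, the existence follows immediately from Proposition \ref{lemma:T_1}.
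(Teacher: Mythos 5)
Your proposal is correct and follows essentially the same route as the paper: identify the game as an instance of $\mathcal{T}_1$ (the ISO's price determination does not impose an $x$-dependent constraint set), observe that convexity of $C_i$ and $F_i$ makes each company's objective concave in its own decision over a compact convex polyhedral set, and invoke Proposition \ref{lemma:T_1}. The paper's own proof is a three-line version of this; your additional care about the capacity bound needed for compactness of $X_i$ and the continuity of the price curves fills in details the paper leaves implicit rather than departing from its argument.
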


\begin{proof}
We observe that this game is of type $\mathcal{T}_1$. 
	If $C_i$ and $F_{i}$ are convex, then the objective functions of the generation companies are concave over a compact set. The ISO determines the prices from the price-curves, and not as a solution of an optimization problem. Hence by Proposition \ref{lemma:T_1}, the result follows.	
\end{proof}
In many cases, the cost functions take a convex quadratic form and hence the result can be applied to find an equilibrium in the  price-taking formulation. Next, we analyze the price-anticipative formulation of the GEP problem.

\subsection{Price-anticipative formulation of the GEP Problem}
In the GEP problem, the common term in the objective function of player $i$'s problem has the structure given by $h= p_i.x_i$, which is of the form of game $\mathcal{E}_2$. Recall from the price-coupling game section that player $i$'s problem in game $\mathcal{E}_2$ is given by the problem denoted by $P^{a\mathbf{2}}_i(x^{-i})$. For the price-coupling game $\mathcal{E}_2$, we define a related game called the price-coupling game with consistent price conjectures denoted by $\bar{\mathcal{E}}_2$ such that an equilibrium of $\mathcal{E}_2$ is also an equilibrium of $\bar{\mathcal{E}}_2$. Player $i$'s problem in the game $\bar{\mathcal{E}}_2$ is given by the problem denoted by $\bar{P}^{a\mathbf{2}}_i(x^{-i},p^{-i})$. For game $\bar{\mathcal{E}}_2$, we propose an optimization problem denoted by $\mathbb{P}^2$ such that a maximizer of this problem is an equilibrium of the game $\bar{\mathcal{E}}_2$. 
An equivalent of problem $\mathbb{P}^2$ in the GEP problem is given as follows: 
$$
\maxproblem{}
{}
{
	\begin{aligned}
	 \sum\limits_{i=1}^{N} \Bigl(   \sum\limits_{t=1}^T [ P_{ti}(X_{ot}) x_{oti} + & MCP_{ti}. e_{ti} +  RTP_{ti}. r_{ti} ] & - [ C_i(x_{ci}) + \sum\limits_{t=1}^T   F_{i}(e_{ti}+r_{ti} )] \Bigr) 
	 \end{aligned} }
{(x_{ci}, x_{oti},e_{ti},r_{ti},P_{ti}, MCP_{ti}, RTP_{ti}, t=1:T, i=1:N ) \in \mathcal{F}^2, }
$$
where $\mathcal{F}^2$ is given by the following. 
\begin{multline*}
\mathcal{F}^2 = \{(x_{ci}, x_{oti},e_{ti},r_{ti},P_{ti}, MCP_{ti}, RTP_{ti}, t=1:T, i=1:N )|  0 \leq x_{oti} \leq x_{ci}, \forall t, 
e_{ti} \leq x_{ci} - x_{oti}, \forall t, \\
r_{ti} \leq x_{ci} - e_{ti}, \forall t, 
R^d_i \leq ( e_{(t+1)i} + r_{(t+1)i} ) -  (e_{ti} + r_{ti} ) \leq R^u_i, \forall t, \\
X_{ot} = \sum_{i=1}^{N} x_{oti}, \forall t,
P_{ti} = T (1- \rho) ELDC(C_0+X_{ot}) a, \forall t, \\
MCP_{ti}= MUP(\sum\limits_{i=1}^N e_{ti} = L_t), \forall t,
RTP_{ti}= MUP(\sum\limits_{i=1}^N r_{ti} 
= \sum\limits_{i=1}^N \rho_i.e_{ti}), \forall t, \\
(P_{ti}, MCP_{ti},RTP_{ti})  = (P_{tj}, MCP_{tj},RTP_{tj}), \forall i,j \in \mathcal{N}, \forall t \}.
\end{multline*}
It can be observed that the term $\sum\limits_{i=1}^{N} - [ C_i(x_{ci}) + \sum\limits_{t=1}^T   F_{i}(e_{ti}+r_{ti} )]$ is a potential function of the player-dependent term in the objective function of player's problem in the GEP game. The following corollary shows that there exists an equilibrium for the  GEP game with consistent price conjectures.

\begin{corollary} (From Theorem \ref{theorem:potential})
	Consider the GEP game with consistent price conjectures. For such a game, consider the problem denoted by $\mathbb{P}^2$. A maximizer of the problem $\mathbb{P}^2$ is an equilibrium of the GEP game with consistent price conjectures.
\end{corollary}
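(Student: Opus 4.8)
The plan is to recognize that this final corollary is a direct application of Theorem \ref{theorem:potential}, so the proof reduces to verifying that the abstract hypotheses of that theorem are instantiated correctly by the GEP game with consistent price conjectures. First I would establish the structural identification: the common price-dependent term $h(p_i, x_i)$ of game $\bar{\mathcal{E}}_2$ corresponds in the GEP setting to the bilinear revenue expression $\sum_{t=1}^T [P_{ti} x_{oti} + MCP_{ti} e_{ti} + RTP_{ti} r_{ti}]$, while the player-dependent term $g_i$ corresponds to $-[C_i(x_{ci}) + \sum_{t=1}^T F_i(e_{ti}+r_{ti})]$. Since each $g_i$ depends only on player $i$'s own decision variables (the capacities, reserves, energy and real-time quantities indexed by $i$) and not on the rivals' variables, the remark following Corollary \ref{Coro:1} applies and the separable sum $\Pi = \sum_{i=1}^N -[C_i(x_{ci}) + \sum_{t=1}^T F_i(e_{ti}+r_{ti})]$ is a valid potential function for the collection $\{g_i\}_{i \in \mathcal{N}}$. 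This is precisely the term the excerpt already flagged as the potential function.

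Next I would identify the feasible set. The set $\mathcal{F}^2$ written out in the GEP section is exactly the shared-constraint set of Lemma \ref{lemma:share_constraint} specialized to this problem: the operational inequality constraints (reserve bounds, energy and real-time availability, ramp limits) furnish the individual action sets $X_i$, the price-curve equations $P_{ti} = T(1-\rho)ELDC(C_0+X_{ot})a$ together with the marginal-unit-price definitions $MCP_{ti} = MUP(\cdot)$ and $RTP_{ti} = MUP(\cdot)$ furnish the membership $p_i \in \SOL[\mathcal{S}(x)]$ (equivalently the set $\mathcal{H}$), and the consistency conditions $(P_{ti}, MCP_{ti}, RTP_{ti}) = (P_{tj}, MCP_{tj}, RTP_{tj})$ furnish the diagonal set $\mathcal{B}$. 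With these identifications in place, the problem $\mathbb{P}^2$ displayed in the GEP section coincides with the abstract $\mathbb{P}^2$ of Theorem \ref{theorem:potential}, since its objective is $\sum_{i \in \mathcal{N}} h(p_i, x_i) + \pi(x)$ with the $h$ and $\pi$ just identified.

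Having matched all the data, the conclusion follows verbatim from the first assertion of Theorem \ref{theorem:potential}: any maximizer of $\mathbb{P}^2$ over $\mathcal{F}^2$ is an equilibrium of $\bar{\mathcal{E}}_2$, hence of the GEP game with consistent price conjectures. I would present the argument in roughly three sentences of prose, one for each identification (the potential function, the shared-constraint feasible set, the objective of $\mathbb{P}^2$), and then invoke Theorem \ref{theorem:potential}. I expect the main subtlety to lie in justifying that the price is genuinely determined as $\SOL[\mathcal{S}(x)]$ for the GEP problem even though several of the prices are computed directly from price curves rather than from an explicit optimization; I would address this by noting that the $MUP(\cdot)$ definitions and the reserve price curve can each be cast as the solution set of a (possibly trivial) price-determining problem $\mathcal{S}(x)$, so the abstract membership $p_i \in \SOL[\mathcal{S}(x)]$ is satisfied and the hypotheses of Theorem \ref{theorem:potential} apply. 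Note that this corollary, as stated, asserts only the implication ``maximizer $\Rightarrow$ equilibrium'' and does not claim existence, so no compactness or continuity verification is needed beyond the structural matching.
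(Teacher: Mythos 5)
Your proposal is correct and follows essentially the same route as the paper, which proves this corollary simply by identifying $\sum_{i=1}^{N} -[C_i(x_{ci}) + \sum_{t=1}^T F_i(e_{ti}+r_{ti})]$ as the potential function for the player-dependent terms and then invoking Theorem \ref{theorem:potential}; your write-up just makes the structural identifications ($h$, $g_i$, $\mathcal{F}^2$, and the casting of the price curves as $\SOL[\mathcal{S}(x)]$) more explicit than the paper does. No gaps.
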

It can be verified that the objective function of the problem $\mathbb{P}^{2}$ is continuous and the constraint set $\mathcal{F}$ is compact. Hence, there exists a maximizer for the above problem which is an equilibrium of the GEP game with consistent price conjectures. 

We considered the application of results of price-coupling games to generation expansion planning problem. Similar problem structures arise in various other market models and hence one can use the results developed in this paper to study the behaviour of the system and the participants in the market.

\section{Conclusion} \label{sec:conclusion}
In this paper, we study a class of games called the price-coupling games. The price-determining player is an important entity in such games which decide the price uniquely for all players as a function of the decision of the players. The existence of an equilibrium is not evident in such class of games. One reason is the coupling of the constraint of players through the price in the system. We formulate these games as price-anticipative formulation and price-taking formulation. In this paper, we provide conditions for the existence of equilibria for different categories of price-coupling games in these formulations. In particular, we analyze the generation expansion planning problem using the results developed in this paper.

%
\IEEEpeerreviewmaketitle

\ifCLASSOPTIONcaptionsoff
  \newpage
\fi



\bibliographystyle{IEEEtran}
\bibliography{IEEEabrv,../../../../Mylatexfiles/ref}

\end{document}